\newcommand{\R}{\mathbb{R}}
\newcommand{\N}{\mathbb{N}}
\newcommand{\Z}{\mathbb{Z}}
\newcommand{\C}{\mathbb{C}}
\newcommand{\F}{\mathcal{F}}
\newcommand{\dq}{\partial_q}
\def\endproof{\quad \hfill$\blacksquare$\vspace{0.15cm}\\}
\newcommand{\ds}{\displaystyle}
\newtheorem{defin}{Definition}
\newtheorem{propo}{ Proposition}
\newtheorem{lemme}{Lemma}
\newtheorem{theorem}{Theorem}
\begin{document}%\color{red}
\title [$q$-Dunkl transform on the line ]
 { $q$-analogue of the Dunkl transform on the real line}
\author[N. Bettaibi \& R. Bettaieb]{ N\'eji Bettaibi\quad  \& \quad Rym  H. Bettaieb}
\address{N. Bettaibi, Institut
Pr\'eparatoire  aux \'Etudes d'Ing\'enieur de Monastir, 5000
Monastir, Tunisia.}
 \email{neji.bettaibi@ipein.rnu.tn}

\address{R. Bettaieb, Institut
Pr\'eparatoire  aux \'Etudes d'Ing\'enieur de Monastir, 5000
Monastir, Tunisia.}
 \email{rym.bettaieb@yahoo.fr}

\begin{abstract} In this paper, we consider a $q$-analogue of the Dunkl operator on
$\R$, we  define and study its associated Fourier transform which
is a  $q$-analogue of  the Dunkl transform. In addition to
several properties, we establish   an inversion formula and prove
a Plancherel theorem for this $q$-Dunkl transform. Next, we study
the $q$-Dunkl intertwining operator and its dual via the
$q$-analogues of the Riemann-Liouville and Weyl transforms. Using
this dual intertwining operator, we provide a relation between the
$q$-Dunkl transform and the $q^2$-analogue Fourier transform
introduced and studied in
 \cite{Rubin, Ru}.
\end{abstract}
 \maketitle
 {\it Key Words:} $q$-Dunkl operator, $q$-Dunkl transform,
 $q$-Dunkl intertwining operator.\\
 \indent{\it 2000 Mathematics Subject classification:} 33D15,
 39A12, 42A38, 44A15, 44A20.
%%%%%%%%%%%%%%%%%%%%%%%%%%%%%%%%%%%%%%%%%%%%%%%%%%%%%%%
\section{Introduction}
The Dunkl operator on $\R$ of index $\ds
\left(\alpha+\frac{1}{2}\right)$ associated with the reflection
group $\Z_2$ is  the differential-difference operator
$\Lambda_\alpha$ introduced by C. F. Dunkl in \cite{d1} by
\begin{equation}\label{D1}
\Lambda_\alpha
(f)(x)=\frac{df(x)}{dx}+\left(\alpha+\frac{1}{2}\right)\frac{f(x)-f(-x)}{x},\quad
\alpha \geq -\frac{1}{2}.
\end{equation}
These operators  are very important in pure mathematics and
physics. They provide a useful tool in the study of special
functions with root systems \cite{d2, Dei}  and they are closely
related to certain representations of degenerate affine Heke
algebras \cite{Che, Op}, moreover the commutative algebra
generated by these operators has been used in the study of certain
exactly solvable models of quantum mechanics, namely the
Calogero-Suterland-Moser models, which deal with systems of
identical particles in a one dimensional space \cite{Lap, Kakei}.

In \cite{d3}, C. F. Dunkl has introduced and studied a Fourier
transform associated with the operator $\Lambda_\alpha$, called
Dunkl transform, but the basic results such as inversion formula
and Placherel theorem were established later by M. F. E. de Jeu in
\cite{j1, j2}.

 C. F. Dunkl has proved in \cite{d2} that there
 exists a linear isomorphism $V_\alpha$, called the Dunkl
 intertwining operator, from the space of polynomials on $\R$ of degree $n$ onto itself,
 satisfying the transmutation relation
 \begin{equation}\label{trans}
\Lambda_\alpha V_\alpha=V_\alpha \frac{d}{dx},\qquad
V_\alpha(1)=1.
 \end{equation}
Next, K. Trim\`eche has proved in \cite{trim} that the operator
$V_\alpha$ can be extended to a topological isomorphism from
$\mathcal{E}(\R)$, the space of $C^\infty$-functions on $\R$, onto
itself satisfying the relation (\ref{trans}).

The goal of this paper is to provide a similar construction for a
$q$-analogue context. The analogue transform we employ to make our
construction is based on some $q$-Bessel functions and
orthogonality results from \cite{KS}, which have important
applications to $q$-deformed mechanics. The $q$-analogue of the
Bessel operator and the Dunkl operator are defined in terms of the
$q^2$-analogue differential operator, $\partial_q$, introduced in
\cite{Ru}.\\

This paper is organized as follows: In Section 2, we present some
preliminaries results and notations that will be useful in the
sequel. In Section 3, we establish some results associated with
the $q$-Bessel transform and study the  $q$-Riemann-Liouville and
the $q$-Weyl  operators. In Section 4, we introduce and study a
$q$-analogue of the Dunkl operator (\ref{D1}) and  we deal with
its eigenfunctions by  giving some of their properties and
providing for them a $q$-integral representations of Mehler type
as well as an orthogonality relation. In section 5, we define and
study the $q$-Dunkl intertwining operator and its dual via the
$q$-Riemann-Liouville and the $q$-Weyl transforms. Finally, in
Section 6,  we study the Fourier transform associated with the
$q$-Dunkl operator ($q$-Dunkl transform),  we establish an
inversion formula,  prove a Plancherel theorem  and we provide a
relation between the $q$-Dunkl transform and the $q^2$-analogue
Fourier transform (see \cite{Rubin, Ru}).

\section{Notations and preliminaries}
For the convenience of the reader, we provide in this section a
summary of the mathematical notations and definitions used in this
paper. We refer the reader to the general references \cite{GR} and
\cite{KC}, for the definitions, notations and properties of the
$q$-shifted factorials and the $q$-hypergeometric functions.
Throughout this paper, we assume $q\in]0, 1[$ and we  denote $\ds
\R_q=\{\pm q^n~~:~~n\in\Z\}$,  $\ds \R_{q,+}=\{q^n~~:~~n\in\Z\}$.\\

\subsection{Basic symbols}
For $x \in \C $, the $q$-shifted factorials are defined by
\begin {equation}
 (x;q)_0=1;~~ (x;q)_n = \ds\prod _{k=0}^{n-1}(1-xq^k),~~
 n=1,2,...;~~  (x;q)_\infty = \ds\prod _{k=0}^\infty(1-xq^k).
\end {equation}
We also denote
 \begin {equation}
 [x]_q={{1-q^x}\over{1-q}},~~~~~ x\in \C\quad {\rm and}\quad [n]_q! ={{(q;q)_n}\over
 {(1-q)^n}},  ~~~~~~ n\in \N.
\end {equation}

\subsection{ Operators and elementary special  functions}~~\\
 The  $q$-Gamma function is given by (see  \cite{Jac} )
$$
\Gamma_q (x) ={(q;q)_{\infty}\over{(q^x;q)_{\infty}}}(1-q)^{1-x} ,
~~ ~~x\neq 0, -1 , -2 ,...
$$
It satisfies the  following relations \begin {equation}
\label{gam1} \Gamma_q (x+1) =[x]_q \Gamma_q (x) ,~~ \Gamma_q (1) =
1~~ and \lim_{q\longrightarrow 1^-}\Gamma_q (x) = \Gamma(x) ,
\Re(x) >0.
\end {equation}
 The $q$-trigonometric functions $q$-cosine and $q$-sine are
defined by ( see \cite {Rubin, Ru})
\begin {equation}\label{cos}
 \cos(x;q^2)=\sum_{n=0}^{\infty}(-1)^nq^{n(n+1)}
{{x^{2n}}\over{[2n]_q!}}\quad{\rm ,
}\quad\sin(x;q^2)=\sum_{n=0}^{\infty}(-1)^nq^{n(n+1)}
{{x^{2n+1}}\over{[2n+1]_q!}}.
\end {equation}
 The $q$-analogue exponential function is given by ( see \cite {Rubin, Ru})
\begin {equation}\label{exp}
e(z;q^2)=\cos(-iz;q^2)+i\sin(-iz;q^2).
\end {equation}
These three functions are absolutely convergent for all $z$ in the
plane and when $q$ tends to 1 they tend to the corresponding
classical ones  pointwise and uniformly on compacts.\\
Note that we have for all $x \in \R_q$ (see \cite{Rubin})
$$|\cos(x;q^2)|\leq \frac{1}{(q;q)_\infty},\quad  |\sin(x;q^2)|\leq
\frac{1}{(q;q)_\infty},$$ and
\begin{equation}\label{iexp}
    |\ e(ix;q^2)|\leq \ds \frac{2}{(q;q)_\infty}.
\end{equation}
The $q^2$-analogue differential operator is ( see \cite {Rubin,
Ru})
\begin {equation}\partial_q(f)(z)=\left\{\begin{array}{cc}
                   \ds\frac{f\left(q^{-1}z\right)+f\left(-q^{-1}z\right)-f\left(qz\right)+f\left(-qz\right)-2f(-z)}{2(1-q)z}
 & if~~z\neq 0 \\
                   \ds\lim_{x\rightarrow 0}\partial_q(f)(x)\qquad({\rm in}~~ \R_q) & if~~z= 0. \\
                 \end{array}\right.
\end {equation}
Remark that  if $f$ is differentiable at $z$, then $\ds
\lim_{q\rightarrow1}\partial_q(f)(z)=f'(z)$.\\
A repeated application of the $q^2$-analogue differential operator
$n$ times is denoted by:$$ \partial_q^0f=f,\quad
\partial_q^{n+1}f=\partial_q(\partial_q^nf).$$
 The following lemma lists some useful computational
properties of $\dq$, and reflects the sensitivity of this operator
to parity of its argument. The proof is straightforward.
\begin{lemme}\label{ld}~~\\
1) $\dq \sin(x;q^2)=\cos(x;q^2)$, $\dq \cos(x;q^2)=-\sin(x;q^2)$
and $\dq e(x;q^2)=e(x;q^2)$.\\
2)  For all function $f$ on $\R_q$, $\ds \dq
f(z)=\frac{f_e(q^{-1}z)-f_e(z)}{(1-q)z}+\frac{f_o(z)-f_o(qz)}{(1-q)z}.$\\
3) For two functions $f$ and $g$ on $\R_q$, we have \\
  \indent $\centerdot$ if $f$ even and $g$ odd $$
  \partial_q(fg)(z)=q\partial_q(f)(qz)g(z)+
  f(qz)\partial_q(g)(z) =\partial_q(g)(z))f(z)+
  qg(qz)\partial_q(f)(qz);$$
\indent $\centerdot$ if $f$ and $g$ are even   $$
  \partial_q(fg)(z)=\partial_q(f)(z)g(q^{-1}z)+
  f(z)\partial_q(g)(z).$$
\end{lemme}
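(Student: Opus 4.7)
The plan is to handle the three items in the order (2), (3), (1): item (2) yields a compact single-term expression for $\dq f$ that makes (3) almost immediate, and (1) then follows by applying (2) to monomials and differentiating the defining series term by term.

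For (2), I would decompose $f = f_e + f_o$ into its even and odd parts and substitute into the definition of $\dq$. Using $f_e(-x)=f_e(x)$ and $f_o(-x)=-f_o(x)$, the six numerator terms collapse to $2f_e(q^{-1}z) - 2f_o(qz) - 2f_e(z) + 2f_o(z)$, and dividing by $2(1-q)z$ gives the claimed identity.

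For (3), I would exploit the fact that if $f$ is even and $g$ is odd then $fg$ is odd, whereas if $f$ and $g$ are both even then $fg$ is even. In each case (2) furnishes a one-term difference quotient for $\dq(fg)(z)$, together with similar one-term expressions for $\dq f(z)$, $\dq g(z)$, and $q\dq f(qz)$. Both product rules then follow from the telescoping identity $AC-BD = (A-B)C + B(C-D) = (C-D)A + D(A-B)$ applied to the appropriate pair of translates of $f$ and $g$ (with $\{A,B,C,D\}$ equal to $\{f(z),f(qz),g(z),g(qz)\}$ in the even/odd case and $\{f(q^{-1}z),f(z),g(q^{-1}z),g(z)\}$ in the even/even case).

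For (1), applying (2) to the monomial $z^n$ gives $\dq(z^{2n+1}) = [2n+1]_q z^{2n}$ and $\dq(z^{2n}) = q^{-2n}[2n]_q z^{2n-1}$. Differentiating (\ref{cos}) term by term and simplifying the exponent via $q^{n(n+1)-2n} = q^{n(n-1)}$ (together with a reindex $n \mapsto n+1$ in the cosine case) yields the sine and cosine identities. For the exponential, writing $e(x;q^2)$ as the sum of its even and odd parts obtained from (\ref{exp}) and applying these two monomial rules shows that $\dq$ swaps the two parts, so $\dq e(x;q^2) = e(x;q^2)$. The only real obstacle throughout is bookkeeping of $q^{\pm 1}$ shifts and signs, which is why the authors label the proof ``straightforward.''
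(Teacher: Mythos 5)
Your proposal is correct: the decomposition in (2) follows exactly as you compute from the definition of $\partial_q$, the two product rules in (3) are precisely the telescoping identity applied to the one-term difference quotients that (2) gives for functions of fixed parity, and the monomial rules $\dq(z^{2n+1})=[2n+1]_qz^{2n}$, $\dq(z^{2n})=q^{-2n}[2n]_qz^{2n-1}$ combined with the exponent identity $q^{n(n+1)-2n}=q^{n(n-1)}$ do yield part (1). The paper itself omits the argument entirely (declaring the proof ``straightforward''), so there is nothing to compare against; your write-up is a complete and correct version of the intended direct verification.
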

Here,  for a function $f$ defined on $\R_q$, $f_e$ and  $f_o$ are
its  even and odd parts respectively.\\
 The $q$-Jackson integrals are defined
by (see \cite {Jac})  {\small\begin {equation} \label{int1}
\int_0^a{f(x)d_qx} =(1-q)a \sum_{n=0}^{\infty}q^nf(aq^n),\quad
\int_a^b{f(x)d_qx} =(1-q)\sum_{n=0}^{\infty}q^n\left(b f(bq^n)-a
f(aq^n)\right),
\end {equation}}
\begin {equation} \label{int2}
\int_0^{\infty}f(x)d_qx
=(1-q)\sum_{n=-\infty}^{\infty}q^nf(q^n),\quad
\int_{-\infty}^{\infty}f(x)d_qx
=(1-q)\sum_{n=-\infty}^{\infty}q^n\left[f(q^n)+f(-q^n)\right],
\end {equation}
provided the sums converge absolutely. In particular, for
$a\in\R_{q,+}$, \begin {equation} \label{int4}
\int_{a}^{\infty}{f(x)d_qx} =(1-q)a \sum_{n=-\infty}^{-1}q^nf(a
q^n),
\end {equation}
The following simple result, giving  $q$-analogues of the
integration by parts theorem, can be verified by direct
calculation.
\begin{lemme}\label{ppar}~~\\
1) For $a>0$, if $\ds \int_{-a}^a (\dq f)(x)g(x)d_qx$ exists, then
{\small \begin{equation} \int_{-a}^a (\dq
f)(x)g(x)d_qx=2\left[f_e(q^{-1}a)g_o(a)+f_o(a)g_e(q^{-1}a)\right]-\int_{-a}^a
f(x)(\dq g)(x)d_qx.
\end{equation}}
2) If $\ds \int_{-\infty}^\infty (\dq f)(x)g(x)d_qx$ exists,
\begin{equation}
\int_{-\infty}^\infty (\dq f)(x)g(x)d_qx=-\int_{-\infty}^\infty
f(x)(\dq g)(x)d_qx.
\end{equation}
\end{lemme}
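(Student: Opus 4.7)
\medskip

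\noindent\textbf{Proof proposal.} The plan is to reduce both identities to a one-sided telescoping computation on the $q$-grid $\{aq^n : n\geq 0\}$. First I would split $f=f_e+f_o$ and $g=g_e+g_o$ and record, from Lemma~\ref{ld}.2, that $\dq$ swaps parity, so that $(\dq f)\,g$ has even part $(\dq f_e)g_o+(\dq f_o)g_e$ and, symmetrically, $f(\dq g)$ has even part $f_e(\dq g_o)+f_o(\dq g_e)$. Since the Jackson-integral definitions (\ref{int1})--(\ref{int2}) immediately give
$$
\int_{-a}^{a}h(x)\,d_qx \;=\; 2\int_0^{a}h_e(x)\,d_qx,
$$
the identity in 1) reduces to establishing the two one-sided formulas
\begin{align*}
\int_0^{a}(\dq f_e)(x)\,g_o(x)\,d_qx &= f_e(q^{-1}a)g_o(a)-\int_0^{a}f_e(x)(\dq g_o)(x)\,d_qx,\\
\int_0^{a}(\dq f_o)(x)\,g_e(x)\,d_qx &= f_o(a)g_e(q^{-1}a)-\int_0^{a}f_o(x)(\dq g_e)(x)\,d_qx.
\end{align*}

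Each one is proved by an Abel summation. For the first, Lemma~\ref{ld}.2 gives $(1-q)aq^{n}(\dq f_e)(aq^{n})=f_e(aq^{n-1})-f_e(aq^{n})$, so
$$
\int_0^{a}(\dq f_e)g_o\,d_qx=\sum_{n\geq 0}\bigl[f_e(aq^{n-1})-f_e(aq^{n})\bigr]g_o(aq^{n}).
$$
Splitting the sum, reindexing the first piece by $m=n-1$, and extracting the $m=-1$ boundary term yields
$$
f_e(q^{-1}a)g_o(a)-\sum_{n\geq 0}f_e(aq^{n})\bigl[g_o(aq^{n})-g_o(aq^{n+1})\bigr],
$$
and the remaining sum is exactly $\int_0^{a}f_e(\dq g_o)\,d_qx$ since $(1-q)aq^{n}\dq g_o(aq^{n})=g_o(aq^{n})-g_o(aq^{n+1})$ ($g_o$ being odd). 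The second identity is completely analogous: the telescoping now produces the $n=0$ boundary contribution $f_o(a)g_e(q^{-1}a)$ after using $(1-q)aq^{n}\dq g_e(aq^{n})=g_e(aq^{n-1})-g_e(aq^{n})$. Adding the two identities and multiplying by $2$ gives part~1).

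For part~2), apply 1) with $a=q^{-N}\in\R_{q,+}$ and let $N\to+\infty$. Writing the symmetric $q$-integral in (\ref{int2}) as the limit $\lim_{N\to\infty}\int_{-q^{-N}}^{q^{-N}}$ and invoking the existence of $\int_{-\infty}^{\infty}(\dq f)g\,d_qx$ (which, together with the already-established identity, forces the existence of $\int_{-\infty}^{\infty}f(\dq g)\,d_qx$), one sees that the boundary expression
$$
2\bigl[f_e(q^{-N-1})g_o(q^{-N})+f_o(q^{-N})g_e(q^{-N-1})\bigr]
$$
is the difference of partial sums of a convergent Jackson series and therefore must tend to $0$ along the $q$-grid. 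The main technical point I expect to have to be careful about is exactly this last step: the vanishing of the boundary terms at infinity is a consequence of the convergence hypothesis on the $q$-integrals, not of any classical decay of $f,g$, and the argument has to be phrased in terms of tails of absolutely convergent series rather than pointwise limits of $f$ and $g$.
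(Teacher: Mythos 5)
Your reduction to the even part on $[0,a]\cap\R_{q,+}$ followed by Abel summation is exactly the ``direct calculation'' the paper alludes to (it gives no actual proof of this lemma), and the two one-sided identities you isolate, with boundary terms $f_e(q^{-1}a)g_o(a)$ and $f_o(a)g_e(q^{-1}a)$, are the right ones. There is, however, a genuine gap at both places where you dispose of a boundary contribution. In part 1), ``splitting the sum'' into $\sum_{n\geq 0}f_e(aq^{n-1})g_o(aq^{n})$ and $\sum_{n\geq 0}f_e(aq^{n})g_o(aq^{n})$ presupposes that each piece converges separately, which the hypothesis (convergence of the series of \emph{differences}) does not give. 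If you instead Abel-sum the partial sums $\sum_{n=0}^{N}$, a second boundary term $-f_e(aq^{N})g_o(aq^{N})-f_o(aq^{N+1})g_e(aq^{N})$ appears at the origin end, and you must argue that it tends to $0$ as $N\to\infty$. This holds whenever the odd parts of $f$ and $g$ tend to $0$ at the origin --- true in every space the paper actually uses, since membership in $\mathcal{E}_q(\R_q)$, $\mathcal{S}_q(\R_q)$ or $\mathcal{D}_q(\R_q)$ requires $\lim_{x\to 0}$ to exist in $\R_q$ --- but it is not a consequence of the stated hypothesis: for $f\equiv 1$ and $g(x)=x/|x|$ one has $\dq f=\dq g=0$, so both $q$-integrals vanish, yet the right-hand side of 1) equals $2$.

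The same issue is sharper in part 2), where your justification does not work even granting convergence. The existence of the two doubly infinite $q$-integrals only forces the boundary sequence $B_N=f_e(q^{-N-1})g_o(q^{-N})+f_o(q^{-N})g_e(q^{-N-1})$ to \emph{converge} (a convergent telescoping series has a convergent, not necessarily null, boundary sequence), and its limit is precisely half of $\int_{-\infty}^{\infty}\bigl[(\dq f)g+f(\dq g)\bigr]d_qx$, which is what you are trying to show is zero --- so the argument is circular at this point. Likewise, the existence of $\int f(\dq g)$ is not ``forced'' by the identity of part 1); only the combination $2B_N-\int_{-q^{-N}}^{q^{-N}}f(\dq g)\,d_qx$ is controlled. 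A concrete failure: $f\equiv 1$ and $g(x)=x/(1+|x|)$ give $\int_{-\infty}^{\infty}(\dq f)g\,d_qx=0$ but $\int_{-\infty}^{\infty}f(\dq g)\,d_qx=2$. So the vanishing of the boundary terms at $0$ and at infinity must be taken as (implicit) hypotheses --- e.g.\ $f,g$ in the paper's function spaces with $f_eg_o$ and $f_og_e$ tending to $0$ at both ends of $\R_{q,+}$ --- rather than derived from convergence of the $q$-integrals. With that proviso added, your computation is correct and is the proof the paper intends.
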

\subsection{Sets and spaces}~~\\
By the use of the $q^2$-analogue differential operator $\dq$, we
note:\\
$ \bullet$ $ \mathcal{E}_{q}(\R_q)$ the space of   functions $f$
defined on $\R_q$, satisfying
  $$\forall n\in\N,~~a \geq 0,
 ~~~~P_{n,a}(f) = \sup\left\{ |\partial _q^kf(x)|; 0\leq k\leq n ; x \in
 [-a,a]\cap \R_q\right\}<\infty$$ and  $$ \lim_{x\rightarrow 0}\partial _q^nf(x)\quad ({\rm in}\quad
 \R_q)\qquad {\rm
 exists}.$$
 We provide it with the topology  defined by the semi
 norms $P_{n,a}.$ \\
 $ \bullet$ $ \mathcal{E}_{\ast ,q}(\R_q)$ the subspace of
$\mathcal{E}_{q}(\R_q)$ constituted of even
functions. \\
$ \bullet$ $ \mathcal{S}_{q}(\R_q)$ the space of  functions $f$
defined  on $\R_q$ satisfying
$$\forall n,m\in\N,~~~~P_{n,m,q}(f)=\sup_{x\in\R_q}\mid x^m \dq^nf(x)\mid<+\infty$$
and $$ \lim_{x\rightarrow
0}\partial _q^nf(x)\quad ({\rm in}\quad
 \R_q)\qquad {\rm
 exists}.$$
  $\bullet$ $ \mathcal{S}_{\ast ,q}(\R_q)$ the subspace of
$\mathcal{S}_{q}(\R_q)$ constituted of even
functions. \\
$\bullet$ $ \mathcal{D}_{q}(\R_q)$ the space of   functions defined on $\R_q$ with compact supports.\\
$\bullet$ $ \mathcal{D}_{\ast ,q}(\R_q)$ the subspace of
$\mathcal{D}_{q}(\R_q)$ constituted of even
functions. \\
Using the $q$-Jackson integrals, we note for $p>0 $ and  $\alpha \in \R$,\\
$\bullet \ds L_q^p(\R_q)=\left\{f:
\|f\|_{p,q}=\left(\int_{-\infty}^{\infty}|f(x)|^pd_qx\right)^{\frac{1}{p}}<\infty
\right\},$ \\
$\bullet$ $ \ds L_q^p(\R_{q,+})=\left\{f:
\|f\|_{p,q}=\left(\int_{0}^{\infty}|f(x)|^pd_qx\right)^{\frac{1}{p}}<\infty
\right\},$ \\
$ \bullet$ $ \ds L_{\alpha ,q}^p(\R_q)=\left\{f:
\|f\|_{p,\alpha,q}=\left(\int_{-\infty}^{\infty}|f(x)|^p
 |x|^{2\alpha +1}
 d_qx\right)^{\frac{1}{p}}<\infty \right\}, $\\
 $ \bullet$ $ \ds L_{\alpha ,q}^p(\R_{q,+})=\left\{f:
\|f\|_{p,\alpha,q}=\left(\int_{0}^{\infty}|f(x)|^p
 x^{2\alpha +1} d_qx\right)^{\frac{1}{p}}<\infty \right\}, $\\
$ \bullet$ $ \ds L_q^\infty(\R_q)=\left\{f:
\|f\|_{\infty,q}=\sup_{x\in\R_q}|f(x)|<\infty\right\}$, \\
$ \bullet$ $ \ds L_q^\infty(\R_{q,+})=\left\{f:
\|f\|_{\infty,q}=\sup_{x\in\R_{q,+}}|f(x)|<\infty\right\}. $ \\

\subsection{ $q^2$-Analogue Fourier transform}
R. L. Rubin defined in \cite{Ru} the $q^2$-analogue Fourier
transform as
\begin {equation}\label{fou}
\widehat{f}(x;q^2)=K \int_{-\infty}^\infty f(t)e(-itx;q^2)d_qt,
\end {equation}
where $K = \ds \frac{(1+q)^{\frac{1}{2}}}{2\Gamma_{q^2}
\left(\frac{1}{2}\right)}.$\\
 Letting $q\uparrow 1$ subject to the condition
\begin{equation}\label{q}
\frac{Log(1-q)}{Log(q)}\in 2\Z,
\end{equation} gives,
at least formally, the classical Fourier transform. In the
remainder of this paper, we assume that the condition (\ref{q})
holds.

 It was shown in \cite{Ru} that $\widehat{f}(.;q^2)$ verifies the following
 properties:\\
 1) If $f(u),~~uf(u)\in L_q^1(\R_q)$, then $\ds \dq\left(~\widehat{f}~\right) (x;q^2)
 =(-iuf(u))\widehat{}(x;q^2).$\\
  2) If $f, ~~\dq f\in L_q^1(\R_q)$, then $\ds \left(\dq
 f\right)~\widehat{}~(x;q^2)=ix\widehat{f}~(x;q^2)$.\\
 3) $\widehat{f}~(.;q^2)$ is an isomorphism from $L_q^2(\R_q)$ onto
 itself.  For $f\in L_q^2(\R_q)$, we have\\
 $\ds \forall x\in\R_q,~~  \left(\widehat{f}  \right)^{-1}(x;q^2)=\widehat{f}(-x;q^2)$ and $\ds
\|\widehat{f}~(.;q^2)\|_{2,q}=\|f\|_{2,q}$.

\section{$q$-Bessel Fourier Transform}
The normalized $q$-Bessel function is defined by
\begin{equation}\label{j}
j_\alpha(x;q^2) =  \ds \sum_{n=0}^{+\infty} (-1)^n
\frac{\Gamma_{q^2}(\alpha+1)q^{n(n+1)}}{\Gamma_{q^2}(\alpha+n+1)
\Gamma_{q^2}(n+1)}\left( \frac{x}{1+q} \right)^{2n}.
\end{equation}
Note that  we have
\begin{equation}\label{jJ} j_\alpha(x;q^2) = (1-q^2)^\alpha \Gamma_{q^2}(\alpha+1)
\left((1-q)x\right)^{-\alpha}  J_\alpha((1-q)x ; q^2),
\end{equation}
where \begin{equation}J_\alpha(x ; q^2)= \ds \frac{x^\alpha
(q^{2\alpha +2};q^2)_\infty}{(q^2;q^2)_\infty} ._1\varphi
_1(0;q^{2\alpha +2};q^2,q^2x^2)
\end{equation}
is the Jackson's third $q$-Bessel function.\\
 Using the relations (\ref{j}) and  (\ref{cos}), we
obtain
\begin {equation}\label{jcos}j_{-\frac{1}{2}}(x;q^2) = \cos(x;q^2),\end {equation}
\begin {equation}\label{jsin}j_{\frac{1}{2}}(x;q^2) =\frac{\sin(x;q^2)}{x} \end
{equation} and
\begin {equation}\label{dqj}
\partial_q j_\alpha(x;q^2)=-\frac{x}{[2\alpha +2]_q}j_{\alpha +1}(x;q^2)
.\end {equation} In \cite{NW}, the authors proved the following
estimation.
\begin{lemme}\label{asympj}
For $\ds\alpha \geq -\frac{1}{2}$~~and  $x\in \R_{q},$
\begin{eqnarray*}
 \bullet ~~~~|j_\alpha(x;q^2)| & \leq & \ds\frac{(-q^2;q^2)_\infty (-q^{2\alpha +1};q^2)_\infty}
  {(q^{2\alpha +1};q^2)_\infty} \left \{ \begin{array}{cc}
    \\                                     1, & {\rm if}~~ |x| \leq \frac{1}{1-q} \\
                                           q^{\left(\frac{Log(1-q)|x|}{Log
                                           q}\right)^2},
                                           & {\rm if}~~ |x| \geq \frac{1}{1-q} \\
                                         \end{array}\right.
\end{eqnarray*} ~~
\indent \quad \ $\bullet$ ~~for~~ all ~~$v \in \R,$
  $
  j_\alpha(x;q^2) = o(x^{-v})~~ as~~ |x| \longrightarrow +\infty \quad ({\rm in}\quad \R_q).
$
\end{lemme}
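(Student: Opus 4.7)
The plan is to derive both estimates from the power-series representation (\ref{j}) after simplifying via the $q$-Gamma identity $\Gamma_{q^2}(\alpha+1)/\Gamma_{q^2}(\alpha+n+1) = (1-q^2)^n/(q^{2\alpha+2};q^2)_n$, which recasts (\ref{j}) in the cleaner form
$$j_\alpha(x;q^2)=\sum_{n=0}^{\infty}\frac{(-1)^n q^{n(n+1)}\bigl((1-q)x\bigr)^{2n}}{(q^{2\alpha+2};q^2)_n\,(q^2;q^2)_n}.$$

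For $|x|\le 1/(1-q)$ I would bound $|(1-q)x|^{2n}\le 1$, use $(q^{2\alpha+2};q^2)_n\ge(q^{2\alpha+2};q^2)_\infty$, and apply Euler's identity $\sum_{n\ge 0}q^{n(n+1)}/(q^2;q^2)_n=(-q^2;q^2)_\infty$ to get the upper bound $(-q^2;q^2)_\infty/(q^{2\alpha+2};q^2)_\infty$. This is dominated by the stated constant $(-q^2;q^2)_\infty(-q^{2\alpha+1};q^2)_\infty/(q^{2\alpha+1};q^2)_\infty$ via the elementary termwise comparison $1/(1-q^{2\alpha+2+2k}) \le (1+q^{2\alpha+1+2k})/(1-q^{2\alpha+1+2k})$, valid for every $k\ge 0$ since $q\in\,]0,1[$.

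For $|x|>1/(1-q)$ with $x\in\R_q$, write $(1-q)|x|=q^{-m}$ with integer $m\ge 1$. The absolute value of the $n$-th term is then $q^{n(n+1)-2nm}/\bigl[(q^{2\alpha+2};q^2)_n(q^2;q^2)_n\bigr]$, and the exponent $n(n+1)-2nm$ is minimised near $n=m$ with minimum $-m^2+O(m)$. A discrete saddle-point estimate that bounds the series by its dominant term (together with a geometric tail) then yields a bound of order $q^{m^2}=q^{(\log((1-q)|x|)/\log q)^2}$, which is the announced inequality. The asymptotic $j_\alpha(x;q^2)=o(|x|^{-v})$ is then immediate: setting $t=\log((1-q)|x|)/\log q$, one has $t\to-\infty$ as $|x|\to\infty$ in $\R_q$, so $|x|^{v}q^{t^2}=(1-q)^{-v}q^{t^2-vt}\to 0$ since $t^2-vt\to+\infty$.

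The main obstacle is the large-argument estimate: one must show that on the lattice $\R_q$ the oscillating $q$-series enjoys near-complete cancellation around the dominant index $n\approx m$, verifying that the ratio of consecutive terms crosses unity monotonically so that alternating-series error bounds apply, and that the $q$-factorial denominators do not ruin the Gaussian decay. The lattice hypothesis $x\in\R_q$ is essential in order to place the saddle at an integer index, and this discrete-saddle-point analysis constitutes the substance of the argument in \cite{NW}; the small-argument estimate and the $o$-statement follow mechanically from it.
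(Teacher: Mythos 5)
The paper itself offers no proof of this lemma: it is quoted directly from \cite{NW}. Your small-argument bound, on the other hand, is a complete and correct self-contained argument: the rewriting of (\ref{j}) via the $q$-Gamma functional equation, Euler's identity $\sum_{n\ge0}q^{n(n+1)}/(q^2;q^2)_n=(-q^2;q^2)_\infty$, and the termwise comparison absorbing $1/(q^{2\alpha+2};q^2)_\infty$ into the stated constant all check out (at $\alpha=-\frac{1}{2}$ the stated constant is infinite, so that case is vacuous). The deduction of the $o(x^{-v})$ statement from the Gaussian bound is also fine.

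The large-argument estimate, however, contains a genuine gap, and the specific mechanism you propose would fail. With $(1-q)|x|=q^{-m}$, the modulus of the $n$-th term of the series is comparable to $q^{\,n^2+(1-2m)n}$, whose minimum over integers $n$ is attained at $n=m$ (or $n=m-1$) with value $q^{\,m-m^2}$. Thus the dominant term has order $q^{-m^2+m}$, which tends to infinity, while the claimed bound is $q^{m^2}$, which tends to zero. Bounding the series by its dominant term plus a geometric tail therefore cannot yield the result, and neither can an alternating-series error bound, which at best controls the sum by a single term that is already exponentially too large. The entire content of the estimate is the near-total cancellation of terms of size up to $q^{-m^2+m}$ down to a sum of size $q^{m^2}$ --- a gain of a factor $q^{2m^2-m}$ --- and extracting it requires genuine $q$-series identities: in \cite{NW} and \cite{KS} it comes from the explicit behaviour of Jackson's third $q$-Bessel function, i.e.\ of ${}_1\varphi_1(0;q^{2\alpha+2};q^2,\cdot)$ evaluated at the lattice points $q^{2-2m}$, via the symmetry exchanging argument and parameter. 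You do flag that this cancellation is ``the substance of the argument in \cite{NW}'', but that is an admission that the key step is absent, and your earlier sentence asserting that the dominant-term bound ``yields a bound of order $q^{m^2}$'' is false as stated.
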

As a consequence of  the previous lemma and the relation
(\ref{dqj}), we have for $\ds\alpha \geq -\frac{1}{2}$,
$$j_\alpha(.;q^2)\in
\mathcal{S}_{*,q}(\R_q).$$
 With the same technique used in \cite{BZ}, we can prove
that
 for~~ $\ds \alpha >-\frac{1}{2}$,  $j_\alpha(.;q^2)$ has  the
following $q$-integral representation of Mehler type
\begin {equation}
j_\alpha(x;q^2) = C(\alpha ;q^2) \ds\int_0^1W_\alpha (t;q^2)
\cos(xt;q^2) d_qt,\end{equation} where
\begin {equation}\label{cq}
 C(\alpha;q^2) = (1+q) \frac{\Gamma _{q^2}(\alpha +1)}{\Gamma
_{q^2}(\frac{1}{2})\Gamma _{q^2}(\alpha +\frac{1}{2})}
 \end {equation}
 and
 \begin{equation}\label{w} W_\alpha (t;q^2)=
\frac{(t^2q^2;q^2)_\infty}{(t^2q^{2\alpha +1};q^2)_\infty}.
\end{equation}
{\bf Remark:} Since the functions  $  W_\alpha (.;q^2)$ and~~
$\cos (.;q^2)$ are even and $\sin (.;q^2)$ is odd, we can write
for~~ $\ds \alpha >-\frac{1}{2},$
\begin {equation}\label{jmeh}j_\alpha(x;q^2) = \frac{1}{2} C(\alpha ;q^2)
\ds\int_{-1}^1W_\alpha (t;q^2) e(-ixt;q^2) d_qt.\end{equation} In
particular, using the inequality (\ref{iexp}), we obtain
\begin {equation}\label{ij}|j_\alpha(x;q^2)| \leq \frac{2}{(q;q)_\infty} ,
\forall x\in \R_{q}.\end {equation}
\begin{propo} \label{ort} For $x,y \in \R_{q,+},$ we have
\begin{equation}\label{jort} (xy)^{\alpha+1} \ds\int_0^{+\infty}
j_\alpha(xt;q^2) j_\alpha(yt;q^2) t^{2\alpha +1} d_qt =
\frac{(1+q)^{2\alpha} \Gamma _{q^2}^2 (\alpha +1)}{(1-q)}\delta
_{x,y}.\end{equation}\end{propo}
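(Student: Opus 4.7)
The plan is to transfer the orthogonality from Jackson's third $q$-Bessel function $J_\alpha$, for which the analogous identity is known (Koornwinder--Swarttouw \cite{KS}), to the normalized $q$-Bessel function $j_\alpha$ via the change-of-normalization relation (\ref{jJ}).

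First I would substitute (\ref{jJ}) into the left-hand side. Since
$$((1-q)xt)^{-\alpha}\bigl((1-q)yt\bigr)^{-\alpha}\cdot t^{2\alpha+1}=(1-q)^{-2\alpha}(xy)^{-\alpha}\,t,$$
the product $j_\alpha(xt;q^2)\,j_\alpha(yt;q^2)\,t^{2\alpha+1}$ equals
$$(1-q^2)^{2\alpha}\,\Gamma_{q^2}^2(\alpha+1)\,(1-q)^{-2\alpha}(xy)^{-\alpha}\,t\,J_\alpha((1-q)xt;q^2)\,J_\alpha((1-q)yt;q^2).$$
Multiplying by the external factor $(xy)^{\alpha+1}$ and using $(1-q^2)^{2\alpha}(1-q)^{-2\alpha}=(1+q)^{2\alpha}$, the left-hand side of (\ref{jort}) becomes
$$(1+q)^{2\alpha}\,\Gamma_{q^2}^2(\alpha+1)\,xy\int_0^{+\infty} J_\alpha((1-q)xt;q^2)\,J_\alpha((1-q)yt;q^2)\,t\,d_qt.$$

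Comparing with the claimed right-hand side, the proof reduces to establishing
$$(1-q)\,xy\int_0^{+\infty} J_\alpha((1-q)xt;q^2)\,J_\alpha((1-q)yt;q^2)\,t\,d_qt=\delta_{x,y},\qquad x,y\in\R_{q,+},$$
which is precisely the Koornwinder--Swarttouw orthogonality relation for Jackson's third $q$-Bessel function, the rescaling by $(1-q)$ being transparent to the discrete summation variable that defines the $q$-Jackson integral.

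The only obstacle is careful bookkeeping of the numerous constants involving $(1\pm q)$ and $\Gamma_{q^2}(\alpha+1)$; no new analytic input beyond the cited \textbf{KS} identity is required. An alternative route would be to insert the Mehler-type representation (\ref{jmeh}) of $j_\alpha$ into the integral, interchange the order of the two $q$-summations, and invoke the Plancherel identity for the $q^2$-Fourier transform recalled in Section 2.4; but this path is less direct than quoting \cite{KS}, and ultimately still relies on a Plancherel-type orthogonality at its core.
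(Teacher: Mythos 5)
Your proposal is correct and follows exactly the same route as the paper, whose entire proof is the single sentence that the result ``follows from the relation (\ref{jJ}) and the orthogonality relation of the Jackson's third $q$-Bessel function proved in \cite{KS}.'' You have simply made explicit the bookkeeping of the constants $(1+q)^{2\alpha}\Gamma_{q^2}^2(\alpha+1)$ and the $(1-q)$-rescaling (which is harmless on the lattice $\R_{q,+}$ thanks to assumption (\ref{q})), which the paper leaves to the reader.
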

\begin{proof} The result follows from the relation (\ref{jJ})
and the orthogonality relation of the Jackson's third $q$-Bessel
function $J_\alpha (.;q^2)$ proved in \cite{KS}.
\end {proof}Using the same technique as in \cite{BZ}, one can
prove the following result.
\begin{propo}\label{pj} For $\lambda \in
\C $, the function $j_\alpha(\lambda x;q^2)$ is the unique even
solution of the problem \begin{equation} \left\{\begin{array}{c}
                          \triangle_{\alpha,q} f(x) = -\lambda^2
f(x), \\
                          f(0) = 1 , \\
                       \end{array}\right. \end{equation}

where $  \triangle_{\alpha,q} f(x) = \ds \frac{1}{|x|^{2\alpha
+1}}
\partial _q[|x|^{2\alpha +1}\partial _q f(x)]$.
\end{propo}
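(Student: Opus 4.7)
The plan is to verify existence by direct substitution into the series of $j_\alpha(\lambda x;q^2)$ and to establish uniqueness by reducing the eigen-equation to a recurrence on Taylor coefficients. Before either step, I would first recast $\triangle_{\alpha,q}$ in a form adapted to even functions. Using Lemma \ref{ld}(2), for any even $f$ and any $z\in\R_{q,+}$,
\begin{equation*}
\triangle_{\alpha,q} f(z) = \frac{1}{(1-q)^2 z^2}\bigl[f(q^{-1}z) - (1+q^{2\alpha})\,f(z) + q^{2\alpha} f(qz)\bigr].
\end{equation*}
Indeed, for even $f$, Lemma \ref{ld}(2) gives $\partial_q f(z)=(f(q^{-1}z)-f(z))/((1-q)z)$; the function $|z|^{2\alpha+1}\partial_q f(z)$ is then odd, and a second application of Lemma \ref{ld}(2) combined with division by $|z|^{2\alpha+1}$ yields the displayed identity.

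For the existence half I substitute $f(x)=j_\alpha(\lambda x;q^2)=\sum_{n\ge0} c_n (\lambda x)^{2n}$, with $c_n=(-1)^n\,\Gamma_{q^2}(\alpha+1)\,q^{n(n+1)}/\bigl(\Gamma_{q^2}(\alpha+n+1)\,\Gamma_{q^2}(n+1)\,(1+q)^{2n}\bigr)$, into the above formula. Applied to a single monomial $z^{2n}$ the bracket equals $z^{2n}\bigl[q^{-2n}-(1+q^{2\alpha})+q^{2\alpha+2n}\bigr]$, which factors as $z^{2n}q^{-2n}(1-q^{2n})(1-q^{2\alpha+2n})$, so each term contributes $\lambda^{2n} q^{-2n}[2n]_q[2\alpha+2n]_q\,z^{2n-2}$ to $\triangle_{\alpha,q} f(z)$. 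Reindexing $n\mapsto n+1$ and comparing with $-\lambda^2 f(z)$, the eigen-equation becomes exactly the recurrence $c_{n+1}=-c_n\,q^{2n+2}/([2n+2]_q[2\alpha+2n+2]_q)$, which is immediate from the closed form of $c_n$ together with the identity $[k]_{q^2}(1+q)=[2k]_q$. The normalization $j_\alpha(0;q^2)=1$ is read off from the $n=0$ term.

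For uniqueness, let $g$ be any even solution of the problem in the implicit regularity class $\mathcal{E}_{\ast,q}(\R_q)$ with $g(0)=1$. Expanding it via its $q$-Taylor series at the origin gives $g(x)=\sum_{n\ge0}a_n x^{2n}$ (only even powers survive by parity), and the computation of the previous paragraph converts $\triangle_{\alpha,q} g = -\lambda^2 g$ term by term into the same recurrence $a_{n+1}\,q^{-2n-2}[2n+2]_q[2\alpha+2n+2]_q=-\lambda^2 a_n$. Starting from $a_0=1$, the coefficients are forced to $a_n=c_n\lambda^{2n}$, whence $g\equiv j_\alpha(\lambda\cdot;q^2)$.

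The main obstacle is the uniqueness step: converting the functional equation on the discrete set $\R_q$ into a coefficient recurrence rests on representing an arbitrary solution in the underlying regularity class by its $q$-Taylor series at the origin. This is the $q$-analogue of the Frobenius selection of the regular solution of the classical Bessel equation, and should be invoked via the $q$-Taylor theorem for $\mathcal{E}_{\ast,q}(\R_q)$; the existence half, by contrast, is purely bookkeeping once the closed expression for $\triangle_{\alpha,q}$ on even functions is in hand.
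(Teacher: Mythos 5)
Your reduction of $\triangle_{\alpha,q}$ on even functions to the three-point formula $\triangle_{\alpha,q}f(z)=\frac{1}{(1-q)^2z^2}\bigl[f(q^{-1}z)-(1+q^{2\alpha})f(z)+q^{2\alpha}f(qz)\bigr]$ is correct, and so is the existence half: the action on $z^{2n}$, the factorization $q^{-2n}(1-q^{2n})(1-q^{2\alpha+2n})$ of the bracket, and the resulting coefficient recurrence all check out against the closed form of $c_n$ via $[k]_{q^2}(1+q)=[2k]_q$. This is in the spirit of the technique of \cite{BZ} that the paper invokes; the paper itself gives no proof beyond that citation.

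The uniqueness half, however, has a genuine gap, and it is the one you flag yourself. There is no $q$-Taylor representation theorem for $\mathcal{E}_{\ast,q}(\R_q)$: its elements are merely functions on the discrete set $\R_q$ whose iterated $\dq$-derivatives are bounded on compacts and admit limits at $0$; nothing forces such a function to equal the sum of a power series. Concretely, on $\R_{q,+}$ your three-point formula turns the eigenvalue equation into a second-order recurrence in $n$ for $u_n=f(q^n)$, whose solution space is two-dimensional; the single scalar condition $f(0)=1$ cannot by itself cut this down to one dimension, and evenness imposes no constraint whatsoever on the values over $\R_{q,+}$. So either the uniqueness must be understood, as in \cite{BZ}, within the class of even functions given by convergent power series --- in which case your coefficient argument is complete as it stands --- or one must argue directly on the recurrence that the second, linearly independent solution, whose leading behaviour near $0$ is governed by the root $q^{-2\alpha}$ of the indicial equation (so $f(z)\sim z^{-2\alpha}$, degenerating to a logarithmic-type term when the roots coincide), is incompatible with the requirement that $\lim_{x\rightarrow 0}\dq^{n}f(x)$ exist for every $n$. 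Neither step appears in your write-up; the first option is the standard and inexpensive fix, but then the class in which uniqueness is asserted should be stated explicitly rather than attributed to $\mathcal{E}_{\ast,q}(\R_q)$.
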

\begin{defin} The $q$-Bessel Fourier transform is defined for $f\in
L_{\alpha,q}^1(\R_{q,+}) , $ by \begin {equation}\label{FB}
\mathcal{F}_{\alpha ,q}(f)(\lambda) = c_{\alpha ,q}
\ds\int_0^\infty f(x)j_\alpha(\lambda x;q^2)x^{2\alpha +1}d_qx
\end {equation} where  \begin {equation}\label{c} c_{\alpha ,q} =
\frac{(1+q)^{-\alpha}}{\Gamma_{q^2}(\alpha+1)}.\end {equation}
\end{defin}
 Letting $q\uparrow 1$ subject to the condition
(\ref{q}),  gives, at least formally, the classical Bessel-Fourier
transform.\\ Some properties of the $q$-Bessel Fourier transform
are given in the following result.
\begin{propo}
 1) For  $f\in L_{\alpha,q}^1(\R_{q,+})$, we have $\mathcal{F}_{\alpha
 ,q}(f)\in L_{q}^\infty(\R_{q,+})$ and $$\|\mathcal{F}_{\alpha
 ,q}(f)\|_{\infty , q}\leq \frac{2c_{\alpha ,q}}{(q;q)_\infty}\|f\|_{1,q}.$$
 2)For  $f, g \in L_{\alpha,q}^1(\R_{q,+})$, we have
 \begin{equation}\label{sym}
    \int_0^\infty f(x)\mathcal{F}_{\alpha ,q}(g)(x)x^{2\alpha
    +1}d_qx =  \ds\int_0^\infty \mathcal{F}_{\alpha ,q}(f)(\lambda) g(\lambda)\lambda^{2\alpha
    +1}d_q\lambda.
\end{equation}
 3) If $f$ and $\triangle_{\alpha,q} f$ are in
 $L_{\alpha,q}^1(\R_{q,+})$, then $$\mathcal{F}_{\alpha
 ,q}(\triangle_{\alpha,q}f)(\lambda)=-\lambda^2\mathcal{F}_{\alpha
 ,q}(f)(\lambda).$$
 4) If $f$ and $x^2 f$ are in $L_{\alpha,q}^1(\R_{q,+})$, then
 $$\triangle_{\alpha,q} (\mathcal{F}_{\alpha ,q}(f))= - \mathcal{F}_{\alpha ,q}(x^2
 f).$$

\end{propo}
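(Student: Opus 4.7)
\emph{Part (1).} This is a direct consequence of the uniform bound $|j_\alpha(x;q^2)|\le 2/(q;q)_\infty$ recorded in (\ref{ij}). The plan is simply to take absolute values inside the $q$-Jackson integral defining $\mathcal{F}_{\alpha,q}(f)(\lambda)$, pull the bound on $j_\alpha$ out, and recognise the remaining integral as $\|f\|_{1,\alpha,q}$.

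\emph{Part (2).} The plan is to apply Fubini's theorem for the double $q$-Jackson integral. The point to verify is absolute convergence: since
\[
\int_0^\infty\!\!\int_0^\infty |f(x)g(\lambda)|\,|j_\alpha(\lambda x;q^2)|\,x^{2\alpha+1}\lambda^{2\alpha+1}\,d_qx\,d_q\lambda
\le \frac{2}{(q;q)_\infty}\|f\|_{1,\alpha,q}\|g\|_{1,\alpha,q},
\]
swapping the order of summation is legal. One then recognises each iterated integral as the appropriate member of (\ref{sym}).

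\emph{Part (3).} The key identity is that $y\mapsto j_\alpha(\lambda y;q^2)$ is the (even) eigenfunction of $\triangle_{\alpha,q}$ with eigenvalue $-\lambda^2$ (Proposition~\ref{pj}). The plan is to transfer $\triangle_{\alpha,q}$ from $f$ onto $j_\alpha(\lambda\,\cdot\,;q^2)$ via two $q$-integrations by parts. Concretely, write $\triangle_{\alpha,q}f(x)\,x^{2\alpha+1}=\partial_q\!\bigl[x^{2\alpha+1}\partial_q f(x)\bigr]$, use Lemma~\ref{ppar} (adapted to $(0,\infty)$ by even extension and the appropriate parity rules of Lemma~\ref{ld}) to pass $\partial_q$ across $j_\alpha$, then repeat, yielding
\[
\mathcal{F}_{\alpha,q}(\triangle_{\alpha,q}f)(\lambda)
= c_{\alpha,q}\int_0^\infty f(x)\,\triangle_{\alpha,q}\!\bigl[j_\alpha(\lambda x;q^2)\bigr]x^{2\alpha+1}d_qx
=-\lambda^2\mathcal{F}_{\alpha,q}(f)(\lambda).
\]
The boundary terms at $0$ and $\infty$ must be shown to vanish: at infinity this follows from the rapid decay of $j_\alpha(\lambda\,\cdot\,;q^2)$ and its $\partial_q$-derivative (Lemma~\ref{asympj} combined with (\ref{dqj})) against the integrability of $f,\triangle_{\alpha,q}f$; at zero it follows from the regularity of $j_\alpha$ near the origin.

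\emph{Part (4).} By symmetry of $j_\alpha(\lambda x;q^2)$ in $(\lambda,x)$ and Proposition~\ref{pj}, we have $\triangle_{\alpha,q}^{(\lambda)} j_\alpha(\lambda x;q^2) = -x^2 j_\alpha(\lambda x;q^2)$. The plan is to interchange $\triangle_{\alpha,q}$ (which is a finite linear combination of $\lambda$-shifts of values of the integrand) with the $q$-Jackson integral. This interchange is justified by $x^2 f\in L^1_{\alpha,q}(\R_{q,+})$ together with the bound (\ref{ij}), which makes the resulting series absolutely convergent and hence amenable to termwise manipulation. One then obtains
\[
\triangle_{\alpha,q}\bigl(\mathcal{F}_{\alpha,q}(f)\bigr)(\lambda)
= c_{\alpha,q}\int_0^\infty f(x)\bigl(-x^2\bigr) j_\alpha(\lambda x;q^2) x^{2\alpha+1}d_qx
= -\mathcal{F}_{\alpha,q}(x^2 f)(\lambda).
\]

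\emph{Main obstacle.} The only nonroutine step is part (3): carrying out two $q$-integrations by parts on the half-line while tracking the $q$-shifts induced by Lemma~\ref{ld} and verifying that the boundary contributions truly vanish. Once this bookkeeping is done and Proposition~\ref{pj} is invoked, everything else reduces to absolute-convergence estimates built from (\ref{ij}).
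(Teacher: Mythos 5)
Your proposal is correct and follows essentially the same route as the paper: part (1) and (2) via the uniform bound (\ref{ij}) and Fubini, part (3) via even extension to $\R_q$ followed by two $q$-integrations by parts together with Proposition~\ref{pj}, and part (4) by differentiating under the $q$-integral sign using the symmetry of $j_\alpha(\lambda x;q^2)$ and Proposition~\ref{pj}. The only cosmetic difference is that the paper passes to the full-line integral $\frac{1}{2}\int_{-\infty}^{\infty}$ so that the boundary terms you track in part (3) are absent from the outset by Lemma~\ref{ppar}(2).
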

\proof 1) follows from the definition of $\mathcal{F}_{\alpha
 ,q}$ and the relation (\ref{ij}).\\
2) Let  $f,g \in L_{\alpha ,q}^1 (\R_{q,+}).$ \\ Since for all
$\lambda ,x \in \R_{q,+} $, we have $\ds \mid j_\alpha(\lambda
x;q^2)\mid \leq \frac{2}{(q;q)_\infty}$,  then
\begin{eqnarray*}
  \int_{0}^{+\infty}\ds\int_{0}^{+\infty} \mid f(x)
g(\lambda ) j_\alpha(\lambda x;q^2)| x ^{2\alpha +1} \lambda
^{2\alpha +1}d_q x d_q \lambda  & \leq & \frac{2}{(q;q)_\infty} \|
f\|_{1,\alpha ,q} \| g\|_{1,\alpha ,q}<\infty .
\end{eqnarray*}
So, by the Fubini's theorem, we can exchange the order of the
$q$-integrals and obtain,
\begin{eqnarray*}
  \int_0^\infty f(x)\mathcal{F}_{\alpha ,q}(g)(x)x^{2\alpha
    +1}d_qx &=&\int_{0}^{+\infty}\int_{0}^{+\infty}  f(x)
g(\lambda ) j_\alpha(\lambda x;q^2) x ^{2\alpha +1} \lambda
^{2\alpha +1}d_q \lambda d_q x\\
&=&\int_{0}^{+\infty}g(\lambda )\left(\int_{0}^{+\infty}  f(x)
j_\alpha(\lambda x;q^2) x ^{2\alpha +1}d_q x\right) \lambda
^{2\alpha +1} d_q\lambda\\&=&\int_0^\infty \mathcal{F}_{\alpha
,q}(f)(\lambda) g(\lambda)\lambda^{2\alpha
    +1}d_q\lambda.
\end{eqnarray*}
3)  For  $f\in L_{\alpha ,q}^1 (\R_{q,+})$ such that
$\triangle_{\alpha,q} f\in L_{\alpha ,q}^1 (\R_{q,+})$,  let
$\widetilde{f}$ be the even function defined on $\R_q$  whose  $f$
is its restriction on $\R_{q,+}$. We have
$\widetilde{\triangle_{\alpha,q} f}$=$\triangle_{\alpha,q}
\widetilde{f}$  and   \begin{eqnarray}\mathcal{F}_{\alpha
,q}(\triangle_{\alpha,q} f)(\lambda)&=&c_{\alpha,q}\int_0^\infty
(\triangle_{\alpha,q} f)(x)j_\alpha(x \lambda;q^2)x^{2\alpha
+1}d_qx\\&=&\frac{c_{\alpha,q}}{2}\int_{-\infty}^\infty
(\triangle_{\alpha,q} \widetilde{f})(x)j_\alpha(x
\lambda;q^2)|x|^{2\alpha +1}d_qx.
\end{eqnarray}
So, Proposition \ref{pj} and two $q$-integrations by parts give
the
  result.\\
  4) The result follows from Proposition \ref{pj}.
\endproof
\begin{propo}\label{Finv}
If $f \in L_{\alpha ,q}^1 (\R_{q,+}),$ then $$ \forall x \in
\R_{q,+},~~ ~~ f(x) = c_{\alpha ,q} \ds\int_0^\infty
\mathcal{F}_{\alpha ,q} (f)(\lambda)j_\alpha(\lambda
x;q^2)\lambda^{2\alpha +1}d_q\lambda .$$
\end{propo}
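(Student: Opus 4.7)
The plan is to compute the right-hand side by substituting the definition (\ref{FB}) of $\mathcal{F}_{\alpha,q}(f)$, swap the order of the two Jackson integrals via Fubini/Tonelli, and then collapse the inner integral using the orthogonality relation (\ref{jort}) of Proposition \ref{ort}.

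First I would write
\begin{equation*}
c_{\alpha,q}\int_0^\infty \mathcal{F}_{\alpha,q}(f)(\lambda)\,j_\alpha(\lambda x;q^2)\,\lambda^{2\alpha+1}d_q\lambda
=c_{\alpha,q}^{\,2}\int_0^\infty\!\!\int_0^\infty f(y)\,j_\alpha(\lambda y;q^2)\,j_\alpha(\lambda x;q^2)\,y^{2\alpha+1}\lambda^{2\alpha+1}d_qy\,d_q\lambda .
\end{equation*}
Next I would justify interchanging the two $q$-integrals. Since Jackson integrals are (signed) series, it suffices to verify that the double series of absolute values converges. Using the uniform bound $|j_\alpha(\lambda y;q^2)|\leq \frac{2}{(q;q)_\infty}$ from (\ref{ij}) on one of the factors, the double sum is dominated by
\begin{equation*}
\tfrac{2}{(q;q)_\infty}\,\|f\|_{1,\alpha,q}\int_0^\infty |j_\alpha(\lambda x;q^2)|\,\lambda^{2\alpha+1}d_q\lambda .
\end{equation*}
The remaining integral is finite for each fixed $x\in\R_{q,+}$: for large $n$ the factor $q^{n(2\alpha+2)}$ controls the terms, while for $n\to-\infty$ the rapid decay $j_\alpha(q^n x;q^2)=o(|q^n x|^{-v})$ for every $v$, from Lemma \ref{asympj}, takes over. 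This is where I expect the main technical work; everything else is formal.

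After Fubini, applying Proposition \ref{ort} to the inner $\lambda$-integral replaces it by
\begin{equation*}
\frac{(1+q)^{2\alpha}\Gamma_{q^2}^{\,2}(\alpha+1)}{(1-q)(xy)^{\alpha+1}}\,\delta_{x,y},
\end{equation*}
leaving a single-variable $q$-integral $\int_0^\infty f(y)\,y^{2\alpha+1}(xy)^{-(\alpha+1)}\delta_{x,y}\,d_qy$. By definition (\ref{int2}), the Jackson integral selects the term $y=x$ with weight $(1-q)x$, producing $(1-q)\,x\cdot x^{2\alpha+1}\cdot x^{-2(\alpha+1)}f(x)=(1-q)\,f(x)$.

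Finally I would collect the constants. The $(1-q)$'s cancel, the powers of $x$ cancel, and there remains $c_{\alpha,q}^{\,2}(1+q)^{2\alpha}\Gamma_{q^2}^{\,2}(\alpha+1)\,f(x)$. With $c_{\alpha,q}=(1+q)^{-\alpha}/\Gamma_{q^2}(\alpha+1)$ from (\ref{c}), this constant is exactly $1$, yielding $f(x)$ and completing the inversion formula.
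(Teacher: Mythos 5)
Your proposal is correct and follows exactly the route the paper takes: its proof of Proposition \ref{Finv} is the one-line remark that the result follows from the bound (\ref{ij}), Proposition \ref{ort} and Fubini's theorem, which is precisely your substitution--interchange--orthogonality argument. Your added details (the convergence of $\int_0^\infty |j_\alpha(\lambda x;q^2)|\lambda^{2\alpha+1}d_q\lambda$ via Lemma \ref{asympj}, and the verification that the constants collapse to $1$) are accurate and simply fill in what the paper leaves implicit.
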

\proof The result follows from the relation (\ref{ij}),
Proposition \ref{ort} and the Fubini's theorem.
\endproof
\begin{theorem}
1) \underline{Plancherel formula} \\
For all $f \in \mathcal{D}_{\ast,q} (\R_q)$, we have \begin
{equation} \label{pbessel}
\|\mathcal{F}_{\alpha,q}(f)\|_{2,\alpha,q}=\|f\|_{2,\alpha, q}.
\end {equation} 2) \underline{Plancherel theorem} \\
The $q$-Bessel transform can be uniquely extended   to an
isometric isomorphism on $L_{\alpha,q}^2(\R_{q,+})$ with
$\mathcal{F}_{\alpha ,q}^{-1} = \mathcal{F}_{\alpha ,q}$.
\end {theorem}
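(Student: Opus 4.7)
My plan is to prove the Plancherel formula on the dense subspace $\mathcal{D}_{\ast,q}(\R_q)$ directly from the orthogonality relation of Proposition~\ref{ort}, and then obtain the Plancherel theorem by continuous extension combined with the inversion formula of Proposition~\ref{Finv}. So fix $f \in \mathcal{D}_{\ast,q}(\R_q)$; its restriction to $\R_{q,+}$ is supported on finitely many points and hence lies in $L_{\alpha,q}^1(\R_{q,+}) \cap L_{\alpha,q}^2(\R_{q,+})$. Starting from
$$\|\mathcal{F}_{\alpha,q}(f)\|_{2,\alpha,q}^2 = \int_0^\infty \mathcal{F}_{\alpha,q}(f)(\lambda)\,\overline{\mathcal{F}_{\alpha,q}(f)(\lambda)}\,\lambda^{2\alpha+1}\,d_q\lambda,$$
I would expand each factor using the definition (\ref{FB}), apply Fubini to push the $\lambda$-integration to the inside, and then invoke (\ref{jort}) to reduce the inner integral to $\frac{(1+q)^{2\alpha}\Gamma_{q^2}^2(\alpha+1)}{(1-q)(xy)^{\alpha+1}}\delta_{x,y}$. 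The Kronecker delta collapses the remaining double $q$-sum in $x,y$ to its diagonal, which an elementary rewriting identifies as $(1-q)\|f\|_{2,\alpha,q}^2$. Collecting constants and using the definition (\ref{c}) of $c_{\alpha,q}$, the overall prefactor $c_{\alpha,q}^2(1+q)^{2\alpha}\Gamma_{q^2}^2(\alpha+1)$ simplifies to $1$, which yields (\ref{pbessel}).

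For the Plancherel theorem, the space $\mathcal{D}_{\ast,q}(\R_q)$ is dense in $L_{\alpha,q}^2(\R_{q,+})$ (any $L^2$-function is approximated in norm by its truncation to finitely many points $q^n$), so combined with the isometry just proved this extends $\mathcal{F}_{\alpha,q}$ uniquely to a bounded linear isometry on all of $L_{\alpha,q}^2(\R_{q,+})$. To identify its inverse, I would apply Proposition~\ref{Finv}: since $j_\alpha(\lambda x;q^2)$ is symmetric in its arguments, the inversion formula reads $\mathcal{F}_{\alpha,q}\circ\mathcal{F}_{\alpha,q}(f) = f$ on $\mathcal{D}_{\ast,q}(\R_q)$, and this identity extends by continuity to the whole $L^2$-space, yielding both surjectivity and $\mathcal{F}_{\alpha,q}^{-1} = \mathcal{F}_{\alpha,q}$.

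The delicate step is the Fubini interchange in the first paragraph. After writing each $q$-integral as an infinite series over $\Z$ via (\ref{int2}), one obtains a triple sum whose absolute convergence must be verified before the order can be swapped; since $f$ has compact support the $x,y$-sums are in fact finite, so the issue reduces to absolute convergence of the $\lambda$-sum $\sum_{k\in\Z} q^{k(2\alpha+2)}|j_\alpha(q^{k+m};q^2)\,j_\alpha(q^{k+n};q^2)|$ for each fixed pair $(m,n)$ in the support of $f$. This is secured by the rapid-decay estimates of Lemma~\ref{asympj}, which ensure that $j_\alpha(\cdot;q^2)$ decays faster than any polynomial on $\R_q$, while convergence near $0$ follows from $\alpha \ge -1/2$.
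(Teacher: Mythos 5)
Your proof is correct, but for the Plancherel formula you take a genuinely different route from the paper. The paper's argument is a two-line deduction from results it has already established: since $f=\mathcal{F}_{\alpha,q}(\mathcal{F}_{\alpha,q}(f))$ by Proposition~\ref{Finv} (after noting $\mathcal{F}_{\alpha,q}(f)\in L^1_{\alpha,q}(\R_{q,+})$), it writes $\|f\|_{2,\alpha,q}^2=\int_0^\infty \mathcal{F}_{\alpha,q}(\mathcal{F}_{\alpha,q}f)\,\overline{f}\,x^{2\alpha+1}d_qx$ and transfers one $\mathcal{F}_{\alpha,q}$ onto $\overline{f}$ via the symmetry relation (\ref{sym}), using that $j_\alpha$ is real so that $\mathcal{F}_{\alpha,q}(\overline{f})=\overline{\mathcal{F}_{\alpha,q}(f)}$. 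You instead expand $\|\mathcal{F}_{\alpha,q}(f)\|_{2,\alpha,q}^2$ from scratch and collapse the inner $\lambda$-integral with the orthogonality relation (\ref{jort}); since Proposition~\ref{Finv} is itself proved from (\ref{jort}), you are in effect inlining that derivation. Your constant bookkeeping checks out ($c_{\alpha,q}^2(1+q)^{2\alpha}\Gamma_{q^2}^2(\alpha+1)=1$, and the diagonal double $q$-sum gives $(1-q)\|f\|_{2,\alpha,q}^2$, cancelling the $(1-q)^{-1}$ in (\ref{jort})), and your Fubini justification via compact support plus Lemma~\ref{asympj} is sound and is actually more explicit than anything in the paper. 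What the paper's route buys is brevity and reuse of established machinery; what yours buys is self-containedness and an explicit verification of the interchange of summation that the paper leaves implicit. Part 2 is handled identically in both proofs (density of the compactly supported even functions, isometric extension, and $\mathcal{F}_{\alpha,q}\circ\mathcal{F}_{\alpha,q}=\mathrm{id}$ passing to the limit).
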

 \proof 1) Let $f \in\mathcal{D}_{\ast,q} (\R_q)$, it is easy to show  that $\mathcal{F}_{\alpha,q}(f)$ is in
 $L_{\alpha,q}^1(\R_{q,+})$. From Proposition
 \ref{Finv}, we have $f = \mathcal{F}_{\alpha,q}(\mathcal{F}_{\alpha,q}(f)),
 $ so using the relation (\ref{sym}), we obtain
 \begin{eqnarray*}
\|f\|_{2,\alpha,q}^2&=&\int_0^\infty f(x)\overline{f}(x)x^{2\alpha
+1}d_qx=\int_0^\infty
\F_{\alpha,q}(\F_{\alpha,q}f)(x)\overline{f}(x)x^{2\alpha
+1}d_qx\\ &=&\int_0^\infty
\F_{\alpha,q}(f)(x)\overline{\F_{\alpha,q}(f)}(x)x^{2\alpha+1}d_qx=\|\F_{\alpha,q}(f)\|_{2,\alpha,q}^2
.
 \end{eqnarray*}
 2) The result follows from 1), Proposition \ref{Finv} and the
 density of $\mathcal{ D}_{\ast,q}(\R_q)$ in  $L_{\alpha,q}^2(\R_{q,+})$.
 \endproof
 \begin{defin}
 For $\ds \alpha
> -\frac{1}{2}$, the $q$-Riemann-Liouville operator  $R_{\alpha ,q } $
is defined for $f \in \mathcal{E}_{\ast ,q}(\R_q)$ by
\begin{equation}\label{R} R_{\alpha ,q }(f)(x) =  \frac{1}{2} C(\alpha ;q^2)
\ds\int_{-1}^1W_\alpha (t;q^2) f(xt) d_qt.
\end{equation}
The $q$-Weyl  operator  is defined for $f \in \mathcal{D}_{\ast
,q}(\R_q)$ by
\begin{equation}\label{tR} ^tR_{\alpha ,q }(f)(t) =   \frac{(1+q)^{-\alpha
+\frac{1}{2}}}{\Gamma _{q^2}(\alpha +\frac{1}{2})} \int_{ q \mid t
\mid }^{+\infty}W_\alpha \left(\frac{t}{x};q^2\right) f(x)
x^{2\alpha}  d_qx.
\end{equation}
\end{defin}

In the end of this section, we shall give some useful properties
of these two operators. First, by simple calculus, one can easily
prove that for $f \in \mathcal{E}_{\ast ,q}(\R_q)$ and  $g \in
\mathcal{D}_{\ast ,q}(\R_q)$, we have
 \begin{equation}\label{relation} \frac{c_{\alpha ,q}}{2} \ds\int_{-\infty}^\infty R_{\alpha ,q
}(f)(x)g(x) |x|^{2\alpha +1}d_qx = K \ds\int_{-\infty}^\infty f(t)
^tR_{\alpha ,q }(g)(t) d_qt. \end{equation} Next, using the
relation (\ref{jmeh}), we obtain
\begin{equation}\label{relation2}
j_\alpha(.;q^2)=R_{\alpha,q}\left(e(-i.;q^2)\right).
\end{equation}
\begin{lemme}\label{cont}
The operator $R_{\alpha ,q }$ is continuous  from
$\mathcal{E}_{*,q}(\R_q)$ into itself.
\end{lemme}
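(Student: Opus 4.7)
First I would observe that since both $W_\alpha(\cdot;q^2)$ and $f$ are even, the integrand in (\ref{R}) is an even function of $t$, and $R_{\alpha,q}(f)$ is automatically an even function on $\R_q$. The heart of the argument is to establish the commutation formula
$$\partial_q^k R_{\alpha,q}(f)(x) = \frac{1}{2}C(\alpha;q^2)\int_{-1}^1 W_\alpha(t;q^2)\, t^k\, (\partial_q^k f)(xt)\, d_qt,$$
which I would prove by induction on $k$. The key intermediate identity is that for any function $g$ on $\R_q$, applying $\partial_q$ in the variable $x$ to $x\mapsto g(xt)$ yields exactly $t\,(\partial_q g)(xt)$; this is verified by splitting $g$ into its even and odd parts and comparing both sides via Lemma \ref{ld}.2). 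The exchange of $\partial_q$ with the $q$-Jackson integral is legitimate because $\partial_q$ is, at any fixed point, a finite linear combination of point evaluations, while the $q$-integral is an absolutely convergent series.

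Next, fix $n\in\N$ and $a>0$. For every $0\leq k\leq n$, $x\in[-a,a]\cap\R_q$ and $t\in[-1,1]\cap\R_q$, one has $xt\in[-a,a]\cap\R_q$ and $|t^k|\leq 1$, so $|t^k(\partial_q^k f)(xt)|\leq P_{n,a}(f)$. Hence
$$|\partial_q^k R_{\alpha,q}(f)(x)|\leq M_\alpha\, P_{n,a}(f), \qquad M_\alpha := \frac{1}{2}C(\alpha;q^2)\int_{-1}^1 W_\alpha(t;q^2)\, d_qt,$$
with $M_\alpha<\infty$ because $\alpha>-\tfrac{1}{2}$ keeps the denominator $(t^2q^{2\alpha+1};q^2)_\infty$ in (\ref{w}) bounded away from $0$ on $[-1,1]\cap\R_q$, while the $q$-integral itself is a convergent geometric-type series.

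To conclude that $R_{\alpha,q}(f)\in\mathcal{E}_{\ast,q}(\R_q)$, I still need the existence of $\lim_{x\to 0}\partial_q^k R_{\alpha,q}(f)(x)$ in $\R_q$. Using the commutation formula and the uniform dominating function $W_\alpha(t;q^2)\,P_{k,1}(f)$ on $[-1,1]\cap\R_q$, I would justify termwise passage to the limit in the Jackson series; since by hypothesis $\lim_{y\to 0}\partial_q^k f(y)$ exists, so does the limit in question. Taking the supremum over $0\leq k\leq n$ and $x\in[-a,a]\cap\R_q$ in the pointwise bound above finally yields $P_{n,a}(R_{\alpha,q}(f))\leq M_\alpha P_{n,a}(f)$, which is the desired continuity statement. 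The main obstacle is the careful parity bookkeeping needed to establish the commutation formula; once that is in hand, everything else reduces to straightforward domination in the $q$-Jackson integral.
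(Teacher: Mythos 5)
Your proposal is correct and follows essentially the same route as the paper: evenness of $R_{\alpha,q}(f)$, the commutation formula $\partial_q^n R_{\alpha,q}(f)(x)=\frac{1}{2}C(\alpha;q^2)\int_{-1}^1 W_\alpha(t;q^2)t^n(\partial_q^n f)(xt)\,d_qt$ obtained by $q$-differentiation under the $q$-integral sign, the resulting seminorm bound, and the Lebesgue theorem for the limit at $0$. The only cosmetic difference is that your constant $M_\alpha=\frac{1}{2}C(\alpha;q^2)\int_{-1}^1 W_\alpha(t;q^2)\,d_qt$ in fact equals $1$ (evaluate the Mehler representation of $j_\alpha(\cdot;q^2)$ at $x=0$), which recovers the paper's sharper bound $P_{n,a}(R_{\alpha,q}(f))\leq P_{n,a}(f)$.
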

\proof Let $f$ be in $\mathcal{E}_{\ast ,q}(\R_q).$ The function
$x\longmapsto R_{\alpha ,q }(f)(x)$ is an even function on $\R_q.$\\
By $q$-derivation under the $q$-integral sign, we deduce that for
all $n\in \N$,\\
$$\partial _q^n R_{\alpha ,q }(f)(x)= \frac{1}{2} C(\alpha ;q^2)
\ds\int_{-1}^1W_\alpha (t;q^2)t^n (\partial _q^n f)(xt) d_qt.$$
Then,   $$\forall a \geq 0 , \forall n \in \N , P_{n,a}(R_{\alpha
,q }(f)) \leq P_{n,a}(f)<\infty.$$ This relation together with the
Lebesgue theorem proves that $R_{\alpha ,q }(f)$ belongs to
$\mathcal{E}_{\ast ,q}(\R_q)$ and it shows that  the operator
$R_{\alpha ,q }$ is continuous from $\mathcal{E}_{\ast ,q}(\R_q)$
into itself.~~~~~~~~
\endproof
  Using the previous lemma and making a proof as in Theorems 3 and 4
of \cite{BZ}, we obtain the following result.

\begin{theorem} The $q$-Riemann-Liouville  operator $ R_{\alpha ,q } $ is a topological  isomorphism from $\mathcal{E}_{\ast
,q}(\R_q)$ onto itself and it transmutes the operators
$\Delta_{\alpha,q}$ and $\partial_q^2$ in  the following sense
 \begin{equation}\label{trans1}
\Delta_{\alpha,q}R_{\alpha ,q }= R_{\alpha ,q }\partial_q^2.
 \end{equation}
\end{theorem}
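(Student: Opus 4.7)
The plan is to follow the strategy of \cite{BZ}: continuity of $R_{\alpha,q}$ is already given by Lemma~\ref{cont}, so two tasks remain — establishing the transmutation identity, and constructing a continuous inverse.

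For the transmutation, I would verify $\Delta_{\alpha,q}R_{\alpha,q}(f) = R_{\alpha,q}(\partial_q^2 f)$ directly from the integral representation~(\ref{R}). By $q$-differentiating under the integral (as in the proof of Lemma~\ref{cont}),
$$\partial_q^n R_{\alpha,q}(f)(x) = \frac{1}{2} C(\alpha;q^2)\int_{-1}^1 W_\alpha(t;q^2)\,t^n (\partial_q^n f)(xt)\,d_q t.$$
Applying the $q$-Leibniz rule of Lemma~\ref{ld}(3) to the even function $|x|^{2\alpha+1}$ and the odd function $\partial_q R_{\alpha,q}(f)$ rewrites $\Delta_{\alpha,q}u = q^{2\alpha+1}\partial_q^2 u + \frac{[2\alpha+1]_q}{x}\partial_q u$ for even $u$ on $\R_{q,+}$. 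The identity then reduces, inside the $q$-integral, to a $q$-integration by parts in the variable $t$ (Lemma~\ref{ppar}(1)) applied after rewriting $(\partial_q^2 f)(xt) = x^{-1}\partial_q^t[(\partial_q f)(xt)]$, the bookkeeping being closed by two identities provable directly from~(\ref{w}): the factorization $W_\alpha(t;q^2)(1 - q^{2\alpha+1}t^2) = W_{\alpha+1}(t;q^2)$ and the derivative formula $\partial_q W_{\alpha+1}(t;q^2) = -[2\alpha+1]_q\,t\,W_\alpha(t;q^2)$. The boundary contribution at $t=\pm q^{-1}$ coming from Lemma~\ref{ppar}(1) vanishes because $W_\alpha(q^{-1};q^2)$ contains the factor $(1;q^2)_\infty = 0$.

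For the isomorphism, expanding the $q$-Jackson integral in~(\ref{R}) as a series gives
$$R_{\alpha,q}(f)(x) = C(\alpha;q^2)(1-q)\sum_{n=0}^\infty q^n W_\alpha(q^n;q^2)\,f(xq^n),$$
a triangular linear system in the values $\{f(xq^n)\}_{n\geq 0}$ with nonvanishing leading coefficient $W_\alpha(1;q^2)$, so it can be solved recursively to produce an explicit convergent $q$-series expression for $R_{\alpha,q}^{-1}(g)(x)$ in terms of the dilates $g(xq^k)$. Bijectivity is then immediate, and continuity of $R_{\alpha,q}^{-1}$ on $\mathcal{E}_{\ast,q}(\R_q)$ follows by a seminorm estimate $P_{n,a}(R_{\alpha,q}^{-1}(g)) \leq c \cdot P_{n,a}(g)$ obtained by an argument strictly parallel to the proof of Lemma~\ref{cont}, using the rapid decay of the coefficients $q^n W_\alpha(q^n;q^2)$.

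The main obstacle will be the transmutation computation: combining the parity considerations for $f$, $|x|^{2\alpha+1}$, and $W_\alpha$ with the vanishing of the boundary terms requires care, and rests on the two $W_\alpha$ identities stated above, which themselves require direct manipulation of the $q$-infinite products in the definition of $W_\alpha$. Once that algebraic core is in place, the rest of the proof is essentially a combination of the $q$-Leibniz rule and seminorm estimates already in evidence in Lemma~\ref{cont}.
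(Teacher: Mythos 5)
Your transmutation computation is sound and essentially complete in outline: the reduction of $\Delta_{\alpha,q}u$ for even $u$ to $q^{2\alpha+1}\partial_q^2u+\frac{[2\alpha+1]_q}{x}\partial_qu$, the $q$-integration by parts in the variable $t$, and the two identities $W_\alpha(t;q^2)(1-q^{2\alpha+1}t^2)=W_{\alpha+1}(t;q^2)$ and $\partial_qW_{\alpha+1}(t;q^2)=-[2\alpha+1]_q\,t\,W_\alpha(t;q^2)$ are exactly the right ingredients (the paper itself uses these same two identities later, in the proof of Theorem \ref{vexp}). The only slip is cosmetic: the vanishing boundary term involves $W_{\alpha+1}(q^{-1};q^2)$ rather than $W_\alpha(q^{-1};q^2)$, but it does contain the factor $(1;q^2)_\infty=0$ as you say. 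Since the paper offers no proof of this theorem beyond Lemma \ref{cont} and a citation of Theorems 3 and 4 of \cite{BZ}, this part of your write-up is a genuine, self-contained addition.

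The gap is in the isomorphism part. The expansion $R_{\alpha,q}(f)(x)=\sum_{n\ge0}a_nf(xq^n)$ with $a_n=C(\alpha;q^2)(1-q)q^nW_\alpha(q^n;q^2)$ is correct, but this is an infinite Toeplitz (convolution) system in the dilation $x\mapsto qx$, not a triangular system with a base case: the recursion $f(x)=a_0^{-1}\bigl(g(x)-\sum_{n\ge1}a_nf(xq^n)\bigr)$ never terminates, and the formal inverse $\sum_kb_kg(xq^k)$, with $\sum_kb_kz^k=\bigl(\sum_na_nz^n\bigr)^{-1}$, converges only if the $b_k$ are summable. Your stated reason for this, the rapid decay of the coefficients $q^nW_\alpha(q^n;q^2)$, is not sufficient: rapid decay of the coefficients of a power series says nothing about the coefficients of its reciprocal (compare $1-2z$ with $\sum_k 2^kz^k$). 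What is actually needed is that $\sum_na_nz^n$ has no zeros in a disk of radius strictly larger than $1$. This follows from the $q$-binomial theorem: since $W_\alpha(q^n;q^2)=\frac{(q^2;q^2)_\infty}{(q^{2\alpha+1};q^2)_\infty}\cdot\frac{(q^{2\alpha+1};q^2)_n}{(q^2;q^2)_n}$, one finds $\sum_na_nz^n=\mathrm{const}\cdot\frac{(q^{2\alpha+2}z;q^2)_\infty}{(qz;q^2)_\infty}$, whose zeros all satisfy $|z|\ge q^{-2\alpha-2}>1$ for $\alpha>-\frac{1}{2}$; hence the reciprocal is a function of the same type with geometrically decaying Taylor coefficients, which is what yields convergence of the inverse series, membership of $R_{\alpha,q}^{-1}(g)$ in $\mathcal{E}_{\ast,q}(\R_q)$, and the seminorm bound $P_{n,a}(R_{\alpha,q}^{-1}(g))\le c\,P_{n,a}(g)$. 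Without this step (or an equivalent localization of the zeros), bijectivity is not ``immediate'' and the claimed recursive solution is only formal.
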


\begin{theorem} The $q$-Weyl  operator $ ^tR_{\alpha ,q } $ is an  isomorphism from $\mathcal{D}_{\ast
,q}(\R_q)$ onto itself,  it transmutes the operators
$\Delta_{\alpha,q}$ and $\partial_q^2$ in  the following sense
 \begin{equation}\label{trans2}
^tR_{\alpha ,q }\Delta_{\alpha,q}= \partial_q^2(^tR_{\alpha ,q })
 \end{equation}
and  for $f \in \mathcal{D}_{\ast ,q}(\R_q)$, we have
\begin{equation}\label{relaBF}\mathcal{F}_{\alpha
 ,q} (f)= \left( ^tR_{\alpha ,q
 }(f)\right)\widehat{~~}(.;q^2).\end{equation}
\end{theorem}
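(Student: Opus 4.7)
My plan is to prove the Fourier-type identity (\ref{relaBF}) first, deduce the transmutation (\ref{trans2}) from it via injectivity of the $q^2$-Fourier transform, and establish the bijection by a direct analysis of the integral (\ref{tR}).

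For (\ref{relaBF}), fix $f \in \mathcal{D}_{\ast,q}(\R_q)$. Since $f$ is even, (\ref{FB}) extends to $\ds\F_{\alpha,q}(f)(\lambda) = \frac{c_{\alpha,q}}{2}\int_{-\infty}^{\infty} f(x)j_\alpha(\lambda x;q^2)|x|^{2\alpha+1}d_qx$. By (\ref{relation2}), $j_\alpha(\lambda x;q^2) = R_{\alpha,q}(e(-i\lambda\,\cdot\,;q^2))(x)$; and since the even kernel $W_\alpha$ in (\ref{R}) annihilates the odd part $-i\sin(\lambda\,\cdot\,;q^2)$ of the exponential on $[-1,1]$, this equals $R_{\alpha,q}(\cos(\lambda\,\cdot\,;q^2))(x)$. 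The duality (\ref{relation}), applied with $\cos(\lambda\,\cdot\,;q^2) \in \mathcal{E}_{\ast,q}(\R_q)$ and $f \in \mathcal{D}_{\ast,q}(\R_q)$, gives $\F_{\alpha,q}(f)(\lambda) = K\int_{-\infty}^{\infty}\cos(\lambda t;q^2)\,{}^tR_{\alpha,q}(f)(t)\,d_qt$. The kernel $W_\alpha(t/x;q^2)$ is even in $t$, so ${}^tR_{\alpha,q}(f)$ is even, and the $\sin$ component of $e(-i\lambda t;q^2)$ contributes nothing; the right-hand side therefore equals $\bigl({}^tR_{\alpha,q}(f)\bigr)\widehat{\;}(\lambda;q^2)$, yielding (\ref{relaBF}).

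For (\ref{trans2}), apply (\ref{relaBF}) to both $\Delta_{\alpha,q}f$ and $f$; the eigenfunction rule $\F_{\alpha,q}(\Delta_{\alpha,q}f) = -\lambda^2\F_{\alpha,q}(f)$ from the listed properties of the $q$-Bessel transform, combined with $(\partial_q^2 h)\widehat{\;}(\lambda;q^2) = -\lambda^2\widehat{h}(\lambda;q^2)$ (obtained by iterating the differentiation rule for $\partial_q$ recorded in the preliminaries), produces the equality $\bigl({}^tR_{\alpha,q}(\Delta_{\alpha,q}f)\bigr)\widehat{\;} = \bigl(\partial_q^2\,{}^tR_{\alpha,q}(f)\bigr)\widehat{\;}$, and injectivity of the $q^2$-Fourier transform gives (\ref{trans2}).

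For the isomorphism property, the lower bound $q|t|$ in (\ref{tR}) forces ${}^tR_{\alpha,q}(f)(t) = 0$ once $q|t|$ exceeds the support radius of $f$, while evenness in $t$ is built into the kernel, so ${}^tR_{\alpha,q}$ sends $\mathcal{D}_{\ast,q}(\R_q)$ into itself; injectivity follows from (\ref{relaBF}) together with Proposition~\ref{Finv}. For surjectivity, expand (\ref{tR}) via the $q$-Jackson formula (\ref{int4}) at $t = q^{n_0} \in \R_{q,+}$ as
$${}^tR_{\alpha,q}(f)(q^{n_0}) = C\sum_{k \leq n_0} W_\alpha(q^{n_0-k};q^2)\,q^{k(2\alpha+1)}\,f(q^k),$$
a system upper-triangular in $n_0$ with diagonal coefficient $CW_\alpha(1;q^2)q^{n_0(2\alpha+1)}$; this coefficient is nonzero because $\alpha > -\frac{1}{2}$ makes $W_\alpha(1;q^2) = (q^2;q^2)_\infty/(q^{2\alpha+1};q^2)_\infty$ nonzero. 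Given $g \in \mathcal{D}_{\ast,q}(\R_q)$ supported in $[-q^{n_a},q^{n_a}]$, set $f(q^k) = 0$ for $k < n_a$ and solve the triangular recursion from $n_0 = n_a$ upward; the resulting even $f$ is again supported in $[-q^{n_a},q^{n_a}]$, hence lies in $\mathcal{D}_{\ast,q}(\R_q)$. This surjectivity clause is the main obstacle: unlike the earlier steps, which reduce to the duality (\ref{relation}) and Fourier injectivity, it exploits both the discrete triangular structure of ${}^tR_{\alpha,q}$ in the $q$-Jackson expansion and the nonvanishing of $W_\alpha(1;q^2)$ for $\alpha > -\frac{1}{2}$.
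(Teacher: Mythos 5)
Your proposal is correct, but on two of the three claims it takes a genuinely different route from the paper. For (\ref{relaBF}) you and the paper coincide: the paper derives it in one line from (\ref{relation}) and (\ref{relation2}), and your parity bookkeeping (replacing $e(-i\lambda\cdot\,;q^2)$ by its even part $\cos(\lambda\cdot\,;q^2)$ before invoking the duality, then reinstating the exponential because $^tR_{\alpha,q}(f)$ is even) is exactly the content hidden in that line. For the transmutation (\ref{trans2}) the paper stays on the physical side: it tests $\partial_q^2(^tR_{\alpha,q}g)$ against an arbitrary $f\in\mathcal{D}_{\ast,q}(\R_q)$, performs two $q$-integrations by parts, passes through (\ref{relation}), the transmutation (\ref{trans1}) of $R_{\alpha,q}$ and the skew-symmetry of $\partial_q$, and concludes by arbitrariness of $f$; you instead pass to the Fourier side, combining (\ref{relaBF}) with $\mathcal{F}_{\alpha,q}(\triangle_{\alpha,q}f)=-\lambda^2\mathcal{F}_{\alpha,q}(f)$, the iterated rule $\left(\partial_q^2 h\right)\widehat{~~}(\lambda;q^2)=-\lambda^2\,\widehat{h}(\lambda;q^2)$, and injectivity of $\widehat{~~}(\cdot;q^2)$ on $L_q^2(\R_q)$. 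Your version is shorter but logically downstream of (\ref{relaBF}) and of the $L^2$ theory of the $q^2$-Fourier transform, whereas the paper's is self-contained at the level of $q$-integration by parts; both are valid. For the isomorphism statement the paper simply defers to Proposition 3 of \cite{BZ}, so your argument --- compact support preserved because the $q$-integral in (\ref{tR}) only involves $x$ with $x\geq q|t|$, injectivity via (\ref{relaBF}) and Proposition \ref{Finv}, and surjectivity by expanding (\ref{tR}) through (\ref{int4}) into a triangular system whose diagonal entry $W_\alpha(1;q^2)=(q^2;q^2)_\infty/(q^{2\alpha+1};q^2)_\infty$ is nonzero for $\alpha>-\frac{1}{2}$ --- is a self-contained alternative and the main added value of your write-up; it works precisely because $\mathcal{D}_{\ast,q}(\R_q)$ imposes nothing beyond evenness and compact support, so the recursively constructed $f$ needs no further regularity check.
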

\proof The first part of the result can be proved as Proposition 3
of \cite{BZ} page 158.\\
The relation (\ref{relaBF}) is a consequence of the relations
(\ref{relation}) and (\ref{relation2}).\\
Let us now, prove the relation (\ref{trans2}). Let $g\in
\mathcal{D}_{\ast ,q}(\R_q)$. For all $f\in\mathcal{D}_{\ast
,q}(\R_q)$, we have, using the $q$-integration by parts theorem,
the relations (\ref{relation}) and (\ref{trans1}),
\begin{eqnarray*}
&~~&K\int_{-\infty}^\infty\partial_q^2\left(^tR_{\alpha,q}g\right)(x)f(x)d_qx=
K\int_{-\infty}^\infty
 \left(^tR_{\alpha,q}g\right)(x)\partial_q^2f(x)d_qx\\ &=&\frac{c_{\alpha,q}}{2}\int_{-\infty}^\infty
g(x)R_{\alpha,q}\partial_q^2f(x)|x|^{2\alpha+1}d_qx=
\frac{c_{\alpha,q}}{2}\int_{-\infty}^\infty
g(x)\Delta_{\alpha,q}R_{\alpha,q}f(x)|x|^{2\alpha+1}d_qx\\&=&-\frac{c_{\alpha,q}}{2}\int_{-\infty}^\infty
\partial_qg(x)\partial_q(R_{\alpha,q}f)(x)|x|^{2\alpha+1}d_qx=\frac{c_{\alpha,q}}{2}\int_{-\infty}^\infty
\Delta_{\alpha,q}g(x)R_{\alpha,q}f(x)|x|^{2\alpha+1}d_qx\\&=&K\int_{-\infty}^\infty
{^t}R_{\alpha,q}(\Delta_{\alpha,q}g)(x)f(x)d_qx.
\end{eqnarray*}
%This completes the proof. \endproof
\section{The $q$-Dunkl operator and its eigenfunctions} For $\ds \alpha \geq -\frac{1}{2}$, consider the operators:\\
 \begin{equation}H_{\alpha , q}: f = f_e + f_o \longmapsto f_e +
 q^{2\alpha +1} f_o
\end{equation}
and
\begin{equation}\Lambda _{\alpha , q}(f)(x) = \partial _q \left[H_{\alpha , q}(
f)\right](x) + [2\alpha +1]_q \frac{f(x)- f(-x)}{2x}.
\end{equation}

It is easy to see that for a differentiable function $f$, the
$q$-Dunkl operator $\Lambda _{\alpha , q}(f)$ tends, as $q$ tends
to 1, to the classical Dunkl operator $\Lambda _{\alpha}(f)$
given by (\ref{D1}).\\
In the case $\ds \alpha = -\frac{1}{2}$, $\Lambda _{\alpha , q}$
reduces to the $q^2$-analogue differential
operator $\partial _q$.\\
Some properties of the $q$-Dunkl operator $\Lambda _{\alpha , q}$
are given in the following proposition.
\begin{propo}.\\
i) If $f$ is odd then $\Lambda _{\alpha , q}(f)(x) = q^{2\alpha
+1}
\partial _q f(x) + [2\alpha +1]_q  \ds \frac{f(x)}{x}$ and
if $f$ is even then $\Lambda _{\alpha , q}(f)(x) =
\partial _q f(x) $.\\
ii) If $f$ and $g$ are of the same parity, then
$$\ds\int_{-\infty}^{+\infty} \Lambda _{\alpha , q}(f)(x)
g(x)|x|^{2\alpha +1} d_qx = 0.$$
 iii) For all $f$ and $g$ such that $\ds\int_{-\infty}^{+\infty} \Lambda _{\alpha , q}(f)(x)
g(x)|x|^{2\alpha +1} d_qx$ exists, we have
\begin{equation}\label{labla}\ds\int_{-\infty}^{+\infty} \Lambda _{\alpha , q}(f)(x)
g(x)|x|^{2\alpha +1} d_qx = - \ds\int_{-\infty}^{+\infty} \Lambda
_{\alpha , q}(g)(x) f(x)|x|^{2\alpha +1} d_qx.\end{equation}
 iv) The operator $\Lambda _{\alpha , q}$ lives
 $\mathcal{E}_q(\R_q)$, $\mathcal{S}_q(\R_q)$ and
 $\mathcal{D}_q(\R_q)$ invariant.
\end{propo}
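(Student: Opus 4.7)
For part (i), I would substitute directly into the definition of $\Lambda_{\alpha,q}$. When $f$ is even, $f_o=0$ forces $H_{\alpha,q}(f)=f$ and $f(x)-f(-x)=0$; when $f$ is odd, $f_e=0$ forces $H_{\alpha,q}(f)=q^{2\alpha+1}f$ and $f(x)-f(-x)=2f(x)$. The two claimed formulas drop out immediately.

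For part (ii), the key observation, read off from (i) together with Lemma \ref{ld}(2), is that $\Lambda_{\alpha,q}$ \emph{reverses parity}: an odd $f$ produces $\Lambda_{\alpha,q}(f)$ even (both $\partial_q f$ and $x\mapsto f(x)/x$ are even), and an even $f$ produces $\partial_q f$ odd. If $f$ and $g$ share parity, $\Lambda_{\alpha,q}(f)(x)\,g(x)\,|x|^{2\alpha+1}$ is therefore an odd function on $\R_q$, and its symmetric Jackson integral (\ref{int2}) vanishes summand by summand.

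For part (iii), the plan is to write $f=f_e+f_o$, $g=g_e+g_o$ and expand both sides of (\ref{labla}) by linearity; part (ii) cancels the same-parity contributions, so it suffices to establish the single cross-term identity
\begin{equation*}
\int_{-\infty}^{+\infty}\Lambda_{\alpha,q}(f_e)(x)\,g_o(x)\,|x|^{2\alpha+1}\,d_q x = -\int_{-\infty}^{+\infty}\Lambda_{\alpha,q}(g_o)(x)\,f_e(x)\,|x|^{2\alpha+1}\,d_q x,
\end{equation*}
together with its mirror obtained by exchanging the roles of even and odd. By (i) the left-hand side reads $\int \partial_q(f_e)(x)\,g_o(x)\,|x|^{2\alpha+1}\,d_q x$; Lemma \ref{ppar}(2) shifts $\partial_q$ onto $g_o(x)|x|^{2\alpha+1}$. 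Combining the even-times-odd product rule of Lemma \ref{ld}(3) with the explicit formula $\partial_q(|x|^{2\alpha+1})(z)=q^{-(2\alpha+1)}[2\alpha+1]_q|z|^{2\alpha+1}/z$ gives
\begin{equation*}
\partial_q\bigl[g_o(x)|x|^{2\alpha+1}\bigr](z)=[2\alpha+1]_q\,\frac{g_o(z)}{z}\,|z|^{2\alpha+1}+q^{2\alpha+1}\,|z|^{2\alpha+1}\,\partial_q g_o(z),
\end{equation*}
and the bracket recomposes, again by (i), into $\Lambda_{\alpha,q}(g_o)(z)|z|^{2\alpha+1}$. The hard point is checking that the factor $q^{-(2\alpha+1)}[2\alpha+1]_q$ arising from $\partial_q$ of the weight combines cleanly with the $q^{2\alpha+1}$ that Lemma \ref{ld}(3) attaches to the smooth factor, in just the right way to reconstitute the $q$-Dunkl operator on the odd factor.

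For part (iv), I would verify invariance factor by factor: the map $H_{\alpha,q}$ is a linear combination of $f$ and $f\circ(-\mathrm{id})$ and is therefore continuous on each space; $\partial_q$ preserves $\mathcal{E}_q(\R_q)$, $\mathcal{S}_q(\R_q)$, $\mathcal{D}_q(\R_q)$ straight from the semi-norm definitions; for the residual term $(f(x)-f(-x))/(2x)=f_o(x)/x$, the identity $\partial_q f_o(z)=(f_o(z)-f_o(qz))/((1-q)z)$ shows that $\lim_{z\to 0}f_o(z)/z$ exists whenever $\lim_{z\to 0}\partial_q f_o(z)$ does, and the same manipulation iterated bounds all $\partial_q^k(f_o/x)$ by $\partial_q^{k+1} f_o$, securing invariance of the three spaces.
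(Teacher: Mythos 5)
Your treatment of (i)--(iii) is correct and is, step for step, the paper's own proof: (i) is read off the definition; (ii) uses that $\Lambda_{\alpha,q}$ reverses parity, so the integrand is odd and the symmetric Jackson integral vanishes; for (iii) the paper likewise reduces to the case $f$ even, $g$ odd, applies the $q$-integration by parts of Lemma \ref{ppar} and checks that $\partial_q\bigl[g_o(x)|x|^{2\alpha+1}\bigr](z)=|z|^{2\alpha+1}\bigl([2\alpha+1]_q\,g_o(z)/z+q^{2\alpha+1}\partial_q g_o(z)\bigr)=|z|^{2\alpha+1}\Lambda_{\alpha,q}(g_o)(z)$. Your bookkeeping of the factor $q^{-(2\alpha+1)}[2\alpha+1]_q$ from the weight against the $q$-shift $f(qz)=q^{2\alpha+1}|z|^{2\alpha+1}$ in the product rule is exactly the computation the paper performs (and, as you note, the antisymmetry of the identity makes the mirror case automatic).

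For (iv), iterating your telescoping identity gives precisely the paper's representation $f_o(x)/x=\int_0^1(\partial_q f_o)(xt)\,d_qt$, and this does settle invariance of $\mathcal{E}_q(\R_q)$ and $\mathcal{D}_q(\R_q)$. But the conclusion you draw from it, namely that $\partial_q^k(f_o/x)$ is \emph{bounded} by $\partial_q^{k+1}f_o$, is not enough for $\mathcal{S}_q(\R_q)$: the iteration evaluates $\partial_q f_o$ at the points $q^jz$, $j\geq 0$, which accumulate at the origin, so multiplying by $|z|^m$ destroys the estimate and the required decay $\sup_x|x^m\partial_q^k(f_o(x)/x)|<\infty$ does not follow. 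This is why the paper records the second representation $f_o(x)/x=-\int_1^{\infty}(\partial_q f_o)(xt)\,d_qt$ (valid for $f\in\mathcal{S}_q(\R_q)$, since the full $q$-integral of $\partial_q f_o$ over $(0,\infty)$ vanishes): it involves only values of $\partial_q f_o$ at arguments of modulus at least $|x|$ and hence inherits rapid decay. Alternatively one can write $x^l\cdot f_o(x)/x=x^{l-1}f_o(x)$ for $l\geq 1$ and invoke the equivalent family of seminorms $\sup_x|\partial_q^k(x^lf)|$. Either patch is short, but as written your argument leaves the $\mathcal{S}_q(\R_q)$ case open.
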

\begin{proof}
i) is a direct consequence of the definition of $\Lambda _{\alpha
, q}$.\\
ii) follows from the properties of the $q$-integrals and the fact
that $\Lambda _{\alpha , q}$ change the parity of functions.\\
iii) From ii) we have the result when $f$ and $g$ are of the same parity.\\
Now, suppose that $f$ is even and $g$ is odd. Using Lemma
\ref{ppar}, the property i) of $\Lambda _{\alpha , q}$ and the
properties of the $q^2$-analogue differential operator $\partial
_q$ we obtain
\begin{eqnarray*}
\ds\int_{-\infty}^{+\infty} \Lambda _{\alpha , q}(f)(x)
g(x)|x|^{2\alpha +1} d_qx &=& \ds\int_{-\infty}^{+\infty}
\partial _q(f)(x) g(x)|x|^{2\alpha +1} d_qx\\& =&
-\ds\int_{-\infty}^{+\infty}f(x) \partial _q \left
[g(x)|x|^{2\alpha +1} \right ] d_qx \\&=&
-\ds\int_{-\infty}^{+\infty}f(x)\left [ q^{2\alpha +1}
\partial _q g(x) + [2\alpha +1]_q  \ds \frac{g(x)}{x} \right ]|x|^{2\alpha +1} d_qx\\& =& -
\ds\int_{-\infty}^{+\infty} f(x)\Lambda _{\alpha , q}(g)(x)
|x|^{2\alpha +1} d_qx.
\end{eqnarray*}
iv) follows from the facts that  for $f\in \mathcal{E}_q(\R_q)$,
$$\Lambda _{\alpha , q}(f)(x) = \partial _q \left[H_{\alpha , q}(
f)\right](x) + \frac{[2\alpha
+1]_q}{2}\int_{-1}^{1}\partial_q(f)(xt)d_qt$$ and for $f\in
\mathcal{S}_q(\R_q)$,
\begin{eqnarray*}
\Lambda _{\alpha , q}(f)(x) &=& \partial _q \left[H_{\alpha , q}(
f)\right](x) + [2\alpha
+1]_q\int_{0}^{1}\partial_q(f_o)(xt)d_qt\\&=& \partial _q
\left[H_{\alpha , q}( f)\right](x) - [2\alpha
+1]_q\int_{1}^{\infty}\partial_q(f_o)(xt)d_qt.
\end{eqnarray*}

\end{proof}
Let us now introduce the eigenfunctions of the $q$-Dunkl operator.
\begin{theorem}
For $\lambda \in \C$, the $q$-differential-difference equation:
\begin{equation}\label{prob}\left \{
\begin{array}{cc}
\Lambda _{\alpha , q}(f)& = i \lambda f \\
f(0) & =1\quad\\
\end{array}\right.
\end{equation}
has as unique solution, the function
\begin{equation}\label{psi}
\psi _\lambda ^{\alpha ,q}: x\longmapsto  j_\alpha(\lambda x ;q^2)
+\ds \frac{i\lambda x}{[2\alpha +2]_q }  j_{\alpha +1 }(\lambda x
;q^2).
\end{equation}
\end{theorem}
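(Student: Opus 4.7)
My plan is to convert the first-order eigenvalue problem (\ref{prob}) into a second-order problem accessible to Proposition \ref{pj}. The decisive structural fact is that $\Lambda_{\alpha,q}$ exchanges the even and odd parts of a function: on an even function it reduces to $\dq$, while on an odd function it acts as $q^{2\alpha+1}\dq+[2\alpha+1]_q\,x^{-1}$. Hence, writing $f=f_e+f_o$, the equation $\Lambda_{\alpha,q}f=i\lambda f$ decouples into $\Lambda_{\alpha,q}f_e=i\lambda f_o$ and $\Lambda_{\alpha,q}f_o=i\lambda f_e$, and applying $\Lambda_{\alpha,q}$ a second time yields $\Lambda_{\alpha,q}^2 f_e=-\lambda^2 f_e$. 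The linking identity I would first establish is the algebraic lemma
\[\Lambda_{\alpha,q}^2 g \;=\; \Delta_{\alpha,q}\,g \qquad \text{for every even }g,\]
whose proof reduces to checking that both sides equal $q^{2\alpha+1}\dq^2 g+\frac{[2\alpha+1]_q}{x}\dq g$; for the $\Delta_{\alpha,q}$ side this uses Lemma \ref{ld}(2) applied to the even weight $|x|^{2\alpha+1}$ together with the even-odd product rule from Lemma \ref{ld}(3).

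\textbf{Existence.} I would set $\psi_e(x)=j_\alpha(\lambda x;q^2)$ and $\psi_o(x)=\frac{i\lambda x}{[2\alpha+2]_q}j_{\alpha+1}(\lambda x;q^2)$, the even and odd components of $\psi_\lambda^{\alpha,q}$. The identity $\Lambda_{\alpha,q}\psi_e=\dq\psi_e=i\lambda\psi_o$ is immediate from (\ref{dqj}). For the companion identity $\Lambda_{\alpha,q}\psi_o=i\lambda\psi_e$, I would substitute $g=\psi_e$ in the lemma above and invoke Proposition \ref{pj} to conclude $\Lambda_{\alpha,q}(\dq\psi_e)=-\lambda^2\psi_e$, rewrite the left side as $i\lambda\,\Lambda_{\alpha,q}\psi_o$ using the first identity, and divide by $i\lambda$. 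Finally $\psi_\lambda^{\alpha,q}(0)=j_\alpha(0;q^2)=1$ is read off from (\ref{j}).

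\textbf{Uniqueness and main obstacle.} For any solution $f=f_e+f_o$, the decoupled system yields $\Lambda_{\alpha,q}^2 f_e=-\lambda^2 f_e$, hence by the lemma $\Delta_{\alpha,q}f_e=-\lambda^2 f_e$; combined with $f_e(0)=f(0)=1$, Proposition \ref{pj} forces $f_e=j_\alpha(\lambda x;q^2)$, after which $f_o=(i\lambda)^{-1}\dq f_e$ is pinned down by (\ref{dqj}) to the claimed formula. The real obstacle is the degenerate case $\lambda=0$, where $f_o$ cannot be recovered this way; I would treat it directly by expanding $f_o=\sum_{k\geq 0}a_{2k+1}x^{2k+1}$ and noting that $\Lambda_{\alpha,q}f_o=0$ collapses term-by-term to $[2\alpha+2k+2]_q\,a_{2k+1}=0$, using the identity $q^{2\alpha+1}[2k+1]_q+[2\alpha+1]_q=[2\alpha+2k+2]_q$, so that $\alpha\geq-\tfrac{1}{2}$ forces $f_o\equiv 0$. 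A secondary bookkeeping point is to fix the function class in which uniqueness is claimed (the statement leaves it implicit); the natural choice is functions admitting a formal $q$-Taylor series at the origin, which matches the hypothesis of Proposition \ref{pj} and makes both halves of the argument rigorous.
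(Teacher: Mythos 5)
Your proof is correct and follows essentially the same route as the paper: decompose $f$ into even and odd parts, use the parity-exchanging nature of $\Lambda_{\alpha,q}$ to decouple the equation into $\dq f_e=i\lambda f_o$ and $q^{2\alpha+1}\dq f_o+[2\alpha+1]_q\,f_o/x=i\lambda f_e$, eliminate $f_o$ to obtain the second-order problem for $f_e$ settled by Proposition \ref{pj}, and recover $f_o$ from (\ref{dqj}) (your lemma $\Lambda_{\alpha,q}^2=\Delta_{\alpha,q}$ on even functions is exactly the identity the paper uses implicitly when it rewrites the even-part equation as $q^{2\alpha+1}\dq^2f_e+[2\alpha+1]_q\dq f_e/x=-\lambda^2f_e$). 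Your explicit handling of the degenerate case $\lambda=0$, where one cannot divide by $i\lambda$ to recover $f_o$, addresses a point the paper passes over in silence and is a worthwhile refinement rather than a different method.
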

\begin{proof}
Let $f= f_e + f_o.$  The problem (\ref{prob}) is equivalent to the
system

$\left\{ \begin{array}{cc}
                               \partial _q f_e(x)
 +  q^{2\alpha +1}\partial _q f_o(x) + [2\alpha +1]_q \ds  \frac{f_o(x)}{x}& =i\lambda f_e (x)+ i\lambda f_o (x)\\
                              f_e(0)=1,& \\
                             \end{array}\right.$

 which is equivalent to\\\\ $\left \{\begin{array}{cc}
                             \partial _q f_e(x) = i\lambda f_o(x)& \\
                           q^{2\alpha +1}\partial _q^2 f_e(x) + [2\alpha +1]_q \ds \frac{ \partial _q f_e (x)}{x} & =-\lambda ^2 f_e(x)\\
                            f_e(0)=1. &  \\
                        \end{array}\right.$

Now, using Proposition \ref{pj} and the relation (\ref{dqj}), we
obtain\\ $\left\{
\begin{array}{ccc}
 f_e(x)=  j_\alpha(\lambda x ;q^2)&  \\
  f_o(x)= \ds \frac{1}{i\lambda} \partial _q (j_\alpha(\lambda x ;q^2))& =\ds\frac{i\lambda x}{[2\alpha +2]_q}j_{\alpha +1}(\lambda x ;q^2). \\
\end{array}
\right.$\\ Finally, for $\lambda \in \C$,\\ $\psi _\lambda
^{\alpha ,q}( x)= f(x) = j_\alpha(\lambda x ;q^2) +
\ds\frac{i\lambda x}{[2\alpha +2]_q } j_{\alpha +1 }(\lambda x
;q^2)$.
\end{proof}
The function $\psi _\lambda ^{\alpha ,q}( x),$~~ called $q$-Dunkl
kernel has an unique extention to $\C \times \C$~~ and verifies
the following properties.
\begin{propo}\label{psilabla}

1) $\Lambda _{\alpha , q}\psi _\lambda ^{\alpha ,q} = i\lambda
\psi_\lambda ^{\alpha ,q}. $\\

 2) $\ds \psi _\lambda^{\alpha ,q}(x) = \psi _{x}
^{\alpha ,q}(\lambda)$,  $\ds \psi _{a\lambda}^{\alpha ,q}(x) =
\psi _{\lambda} ^{\alpha ,q}(a x)$ \quad and \quad  $
\overline{\psi _\lambda^{\alpha ,q}(x)} = \psi _{-\lambda}
^{\alpha ,q}(x)$, for $\lambda ,x \in \R$ and $a\in \C$.\\

3) If ~~$\ds \alpha = -\frac{1}{2},$ then $\psi _\lambda ^{\alpha
,q}(x) = e(i\lambda x ; q^2).$\\
For $\ds \alpha > -\frac{1}{2} ,~~\psi _\lambda ^{\alpha ,q}$ has
the following $q$-integral representation of Mehler type
\begin{equation}\label{rym} \psi _\lambda ^{\alpha ,q}(x)= \frac{1}{2}
C(\alpha ;q^2) \int_{-1}^1W_\alpha (t;q^2)(1+t) e(i\lambda xt;q^2)
d_qt,
\end{equation}
 where $C(\alpha ;q^2)$ and $W_\alpha
(t;q^2)$ are given respectively by (\ref{cq}) and (\ref{w}).\\
4) For all $n\in \N $ we have
\begin{equation}\label{majdkpsi}
\mid \partial _q^n \psi _\lambda ^{\alpha ,q}(x)\mid \leq
\frac{4\mid \lambda \mid ^n}{(q;q)_\infty},  ~~ \forall \lambda ,x
\in \R_q.
\end{equation}
In particular for all $\lambda\in \R_q$, $\psi _\lambda ^{\alpha
,q}$ is bounded on $\R_q$ and we have
\begin{equation}\label{psib} \mid  \psi _\lambda ^{\alpha ,q}(x)\mid \leq
\frac{4}{(q;q)_\infty}, ~~ \forall x \in \R_q.
\end{equation}
5) For all $\lambda\in \R_q$, $  \psi _\lambda ^{\alpha ,q} \in
 \mathcal{S}_{q}(\R_q)$.
\end{propo}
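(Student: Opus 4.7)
Parts (1) and (2) are quick. Part (1) is just a restatement of the defining property of $\psi_\lambda^{\alpha,q}$ as the unique solution of the problem (\ref{prob}). For (2), the symmetries $\psi_\lambda^{\alpha,q}(x)=\psi_x^{\alpha,q}(\lambda)$ and $\psi_{a\lambda}^{\alpha,q}(x)=\psi_\lambda^{\alpha,q}(ax)$ are immediate from the closed form (\ref{psi}): both $j_\alpha(\cdot;q^2)$ and $j_{\alpha+1}(\cdot;q^2)$ are even power series in their argument, and the extra prefactor $\lambda x$ is symmetric in $\lambda$ and $x$. The conjugation identity $\overline{\psi_\lambda^{\alpha,q}(x)}=\psi_{-\lambda}^{\alpha,q}(x)$ for real $\lambda,x$ then follows because those power series have real coefficients.

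For the first half of (3), specializing $\alpha=-\frac{1}{2}$ in (\ref{psi}) and invoking (\ref{jcos}), (\ref{jsin}) together with $[1]_q=1$ gives $\psi_\lambda^{-1/2,q}(x)=\cos(\lambda x;q^2)+i\sin(\lambda x;q^2)$, which equals $e(i\lambda x;q^2)$ by (\ref{exp}). For the Mehler-type representation (\ref{rym}) when $\alpha>-\frac{1}{2}$, the strategy is to produce integral formulas for the even and odd parts of $\psi_\lambda^{\alpha,q}$ separately, and then recombine them. First, from (\ref{jmeh}) and the fact that $W_\alpha(t;q^2)\sin(\lambda xt;q^2)$ is odd in $t$, one may rewrite
\[
j_\alpha(\lambda x;q^2)=\frac{1}{2}C(\alpha;q^2)\int_{-1}^1 W_\alpha(t;q^2)\cos(\lambda xt;q^2)\,d_qt.
\]
Applying $\dq$ in $x$ under the $q$-integral, using $\dq\cos(\cdot;q^2)=-\sin(\cdot;q^2)$ from Lemma \ref{ld} together with the scaling rule $\dq[f(\mu\,\cdot)](x)=\mu(\dq f)(\mu x)$, and comparing with (\ref{dqj}) applied to $j_\alpha(\lambda x;q^2)$, yields
\[
\frac{\lambda x}{[2\alpha+2]_q}j_{\alpha+1}(\lambda x;q^2)=\frac{1}{2}C(\alpha;q^2)\int_{-1}^1 W_\alpha(t;q^2)\,t\sin(\lambda xt;q^2)\,d_qt.
\]
Multiplying this second identity by $i$ and adding it to the first produces the integrand $W_\alpha(t;q^2)[\cos(\lambda xt;q^2)+it\sin(\lambda xt;q^2)]$. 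Expanding $(1+t)e(i\lambda xt;q^2)$ via (\ref{exp}) and noting that the two extra terms $W_\alpha(t;q^2)\,i\sin(\lambda xt;q^2)$ and $W_\alpha(t;q^2)\,t\cos(\lambda xt;q^2)$ are odd in $t$ and hence contribute zero to the $q$-integral over $[-1,1]$, one obtains (\ref{rym}).

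For (4), differentiate (\ref{rym}) $n$ times under the $q$-integral sign (justified by absolute convergence), using $\dq e(i\mu\,\cdot;q^2)=i\mu\,e(i\mu\,\cdot;q^2)$ to pull out a factor $(i\lambda t)^n$. The claimed bound then follows at once from $|e(i\lambda xt;q^2)|\leq 2/(q;q)_\infty$ (see (\ref{iexp})), $|t|\leq 1$, $|1+t|\leq 2$, and the normalization $\frac{1}{2}C(\alpha;q^2)\int_{-1}^1 W_\alpha(t;q^2)\,d_qt=j_\alpha(0;q^2)=1$ obtained by evaluating (\ref{jmeh}) at $x=0$. The exceptional case $\alpha=-\frac{1}{2}$ is handled directly from the explicit form in part (3) and (\ref{iexp}). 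Finally, (5) follows from (\ref{psi}): by the remark after Lemma \ref{asympj} the functions $j_\alpha(\lambda\,\cdot;q^2)$ and $j_{\alpha+1}(\lambda\,\cdot;q^2)$ belong to $\mathcal{S}_{\ast,q}(\R_q)$, and multiplying the latter by $x$ produces an odd element of $\mathcal{S}_q(\R_q)$; the sum $\psi_\lambda^{\alpha,q}$ therefore lies in $\mathcal{S}_q(\R_q)$. The only delicate point in the whole proof is the parity bookkeeping inside the $q$-integral that yields (\ref{rym}); everything else is essentially routine.
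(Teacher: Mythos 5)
Your argument is correct and follows essentially the same route as the paper: for (\ref{rym}) you differentiate the Mehler representation of $j_\alpha(\lambda x;q^2)$ under the $q$-integral and use (\ref{dqj}) to identify the odd part, then discard the terms odd in $t$ (the paper does the same computation directly with $e(i\lambda xt;q^2)$ instead of splitting into cosine and sine), and parts (4) and (5) likewise match the paper's induction under the integral sign and reduction to Lemma \ref{asympj}. Your treatment is in fact slightly more careful than the paper's on two minor points, namely the explicit normalization $\frac{1}{2}C(\alpha;q^2)\int_{-1}^1W_\alpha(t;q^2)\,d_qt=1$ used to get the constant in (\ref{majdkpsi}) and the separate handling of $\alpha=-\frac{1}{2}$ in part (4).
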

\proof 1) and 2) are immediate consequences    of the definition
of $\psi _\lambda
^{\alpha ,q}.$\\
3) If $\ds \alpha = -\frac{1}{2}$ then the relations (\ref{jcos}),
(\ref{jsin}) and (\ref{exp}) give the result.\\
If $\ds \alpha > -\frac{1}{2}$, using the definition of $\psi
_\lambda ^{\alpha ,q}$, the parity of the function
$j_\alpha(.;q^2)$ and the relations (\ref{jmeh}) and (\ref{dqj}),
we obtain
\begin{eqnarray*}
  \psi _\lambda ^{\alpha ,q}(x) &=
  & j_\alpha(\lambda x ;q^2)+ \ds \frac{1}{i\lambda}\partial _q \ds(j_\alpha(\lambda x
  ;q^2))\\
  &=& \ds \frac{C(\alpha ;q^2)}{2}  \ds\int_{-1}^1W_\alpha (t;q^2) e(i\lambda xt;q^2)
d_qt + \frac{1}{i}\frac{C(\alpha ;q^2)}{2} \ds\int_{-1}^1W_\alpha
(t;q^2)it e(i\lambda xt;q^2) d_qt,
\end{eqnarray*}
which achieves the proof.\\
4) By induction on $n$ we prove that \\
$\partial _q ^n \psi _\lambda ^{\alpha ,q}(x)= \ds \frac{C(\alpha
;q^2)}{2} (i\lambda )^n \ds\int_{-1}^1W_\alpha (t;q^2)(1+t)t^n
e(i\lambda xt;q^2) d_qt.$ \\ So,  the fact that $|e(i x;q^2)| \leq
\ds \frac{2}{(q;q)_\infty}$ gives the result. \\
5) The result follows from Lemma \ref{asympj}, the relation
(\ref{dqj}) and the properties of $\partial _q$.
\endproof
The function $\psi _\lambda ^{\alpha ,q}$ verifies the following
orthogonality relation.
\begin{propo}\label{ortpsi} For all $x,y \in \R_q$, we have
\begin{equation}\label{psiorto}
    \ds\int_{-\infty}^{+\infty}\psi _\lambda ^{\alpha ,q}(x)\overline
    {\psi _\lambda ^{\alpha ,q}(y)}|\lambda |^{2\alpha
    +1}dq\lambda = \ds\frac{4(1+q)^{2\alpha} \Gamma _{q^2}^2 (\alpha +1)
    \delta _{x,y}}{(1-q)|xy|^{\alpha +1}}.
\end{equation}
\end{propo}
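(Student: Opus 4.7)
The plan is to expand the product $\psi_\lambda^{\alpha,q}(x)\overline{\psi_\lambda^{\alpha,q}(y)}$ using the explicit formula (\ref{psi}) and the conjugation identity $\overline{\psi_\lambda^{\alpha,q}(y)} = \psi_{-\lambda}^{\alpha,q}(y)$ from Proposition \ref{psilabla}, and then exploit the parity of its factors in the variable $\lambda$ so that the integration on $\R_q$ reduces to an integration on $\R_{q,+}$ where Proposition \ref{ort} applies.

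Writing
$$\psi_\lambda^{\alpha,q}(x)\overline{\psi_\lambda^{\alpha,q}(y)} = \left[j_\alpha(\lambda x;q^2) + \tfrac{i\lambda x}{[2\alpha+2]_q}j_{\alpha+1}(\lambda x;q^2)\right]\left[j_\alpha(\lambda y;q^2) - \tfrac{i\lambda y}{[2\alpha+2]_q}j_{\alpha+1}(\lambda y;q^2)\right],$$
one sees that $\lambda \mapsto j_\alpha(\lambda z;q^2)$ is even and $\lambda \mapsto \lambda j_{\alpha+1}(\lambda z;q^2)$ is odd. Multiplying by the even weight $|\lambda|^{2\alpha+1}$, the two cross terms are odd in $\lambda$ and therefore vanish upon $q$-integration over $\R_q$, while the surviving even terms give
$$\int_{-\infty}^{+\infty}\psi_\lambda^{\alpha,q}(x)\overline{\psi_\lambda^{\alpha,q}(y)}|\lambda|^{2\alpha+1}d_q\lambda = 2\int_0^{+\infty}\!\!\left[j_\alpha(\lambda x;q^2)j_\alpha(\lambda y;q^2) + \frac{xy\lambda^2}{[2\alpha+2]_q^2}j_{\alpha+1}(\lambda x;q^2)j_{\alpha+1}(\lambda y;q^2)\right]\!\lambda^{2\alpha+1}d_q\lambda.$$

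Next, since $j_\alpha(.;q^2)$ and $j_{\alpha+1}(.;q^2)$ are even, I replace $x,y$ by $|x|,|y|$ inside the Bessel factors and apply Proposition \ref{ort} with index $\alpha$ to the first summand and with index $\alpha+1$ to the second (the extra $\lambda^2$ being absorbed into $\lambda^{2(\alpha+1)+1}$). The prefactors are then reconciled by the elementary identities $[2\alpha+2]_q = (1+q)[\alpha+1]_{q^2}$ and $\Gamma_{q^2}(\alpha+2) = [\alpha+1]_{q^2}\Gamma_{q^2}(\alpha+1)$, which makes the constants in the two contributions combine into a common factor $\ds (1+q)^{2\alpha}\Gamma_{q^2}^2(\alpha+1)/(1-q)$. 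After simplification one obtains
$$\frac{2(1+q)^{2\alpha}\Gamma_{q^2}^2(\alpha+1)}{(1-q)|xy|^{\alpha+1}}\bigl(1 + \mathrm{sign}(xy)\bigr)\delta_{|x|,|y|}.$$

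The final step, which is the only point requiring a little care, is to check that $\bigl(1+\mathrm{sign}(xy)\bigr)\delta_{|x|,|y|} = 2\delta_{x,y}$ on $\R_q$. Indeed, the expression vanishes unless $|x|=|y|$; since $0\notin\R_q = \{\pm q^n : n\in\Z\}$, the equality $|x|=|y|$ forces either $x=y$ (in which case $\mathrm{sign}(xy)=1$, giving the factor $2$) or $x=-y$ (in which case $\mathrm{sign}(xy)=-1$, giving $0$). Substituting this into the previous display yields exactly the claimed formula (\ref{psiorto}). The whole argument is essentially bookkeeping on top of Proposition \ref{ort}; the only mildly delicate points are keeping the two $q$-Pochhammer normalizations straight and the sign analysis that reconciles $\delta_{|x|,|y|}$ with $\delta_{x,y}$.
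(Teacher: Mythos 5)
Your proposal is correct and follows essentially the same route as the paper's own proof: expand the product, discard the odd-in-$\lambda$ cross terms, apply the orthogonality relation of Proposition \ref{ort} with indices $\alpha$ and $\alpha+1$, reconcile the constants via $[2\alpha+2]_q=(1+q)[\alpha+1]_{q^2}$ and $\Gamma_{q^2}(\alpha+2)=[\alpha+1]_{q^2}\Gamma_{q^2}(\alpha+1)$, and finish with the identity $\bigl(1+\mathrm{sign}(xy)\bigr)\delta_{|x|,|y|}=2\delta_{x,y}$. Your write-up is, if anything, more explicit than the paper's about the parity reduction to $\R_{q,+}$ and the final sign bookkeeping.
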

\begin{proof}
Let $x,y \in \R_q$, the use of the relation (\ref{jort}) and the
properties of the $q$-Jackson's integral lead
to{\small\begin{eqnarray*} &~~&\int_{-\infty}^{+\infty}\psi
_\lambda ^{\alpha ,q}(x)\overline
    {\psi _\lambda ^{\alpha ,q}(y)}|\lambda |^{2\alpha
    +1}dq\lambda \\&=& \int_{-\infty}^{+\infty} j_\alpha(\lambda x ;q^2)
     j_\alpha(\lambda y ;q^2)|\lambda |^{2\alpha +1}dq\lambda
     +\frac{xy}{[2\alpha +2]_q^2}\int_{-\infty}^{+\infty} j_{\alpha +1}(\lambda x ;q^2)
     j_{\alpha +1}(\lambda y ;q^2)|\lambda |^{2\alpha
     +3}dq\lambda \\& = &\frac{2(1+q)^{2\alpha} \Gamma _{q^2}^2 (\alpha +1)
      \delta _{|x|,|y|}}{(1-q)|xy|^{\alpha +1}} + \frac{2xy(1+q)^{2\alpha +2} \Gamma _{q^2}^2
       (\alpha +2)\delta _{|x|,|y|}}{[2\alpha
       +2]_q^2(1-q)|xy|^{\alpha +2}}\\& = & \frac{2(1+q)^{2\alpha} \Gamma _{q^2}^2 (\alpha +1)
      \delta _{|x|,|y|}}{(1-q)|xy|^{\alpha +1}}(1 + sgn(xy))= \frac{4(1+q)^{2\alpha}
      \Gamma _{q^2}^2 (\alpha +1) \delta _{x,y}}{(1-q)|xy|^{\alpha
      +1}}.
\end{eqnarray*}}

\end{proof}
\section{ $q-$Dunkl intertwining operator}
 \begin{defin}We define
the $q-$Dunkl intertwining operator $V_\alpha $ on
$\mathcal{E}_q(\R_q)$ by
\begin{equation}\label{D}
    \forall x \in \R_q , V_{\alpha ,q} (f)(x) = \ds \frac{C(\alpha
;q^2)}{2}  \ds\int_{-1}^1W_\alpha (t;q^2)(1+t)f(xt) d_qt,
\end{equation}
where $C(\alpha ;q^2)$ and $W_\alpha (t;q^2)$ are given by
(\ref{cq}) and (\ref{w}) respectively.
\end{defin}
\begin{theorem}\label{vexp}
We have \\
i) $V_{\alpha ,q} (e(-i\lambda x;q^2)) = \psi _{-\lambda} ^{\alpha ,q}(x)$, $\lambda , x\in \R_q$.\\
ii) $V_{\alpha ,q}$ verifies the following  transmutation relation
\begin{eqnarray}\label{vlabla}
  \Lambda _{\alpha , q}V_{\alpha ,q} (f) = V_{\alpha ,q} (\partial
  _qf),\qquad
   V_{\alpha ,q} (f)(0)  = f(0).
\end{eqnarray}
\end{theorem}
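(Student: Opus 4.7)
For (i), I would simply substitute $f(y)=e(-i\lambda y;q^2)$ into the definition (\ref{D}); the resulting $q$-integral is exactly the Mehler-type formula (\ref{rym}) for $\psi_{-\lambda}^{\alpha,q}(x)$, so the identity is immediate.

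For (ii), the initial value $V_{\alpha,q}(f)(0)=f(0)$ follows by pulling $f(0)$ outside the integral and noting that (\ref{rym}) evaluated at $x=0$, together with $\psi_0^{\alpha,q}(0)=1$, forces $\frac{C(\alpha;q^2)}{2}\int_{-1}^{1}(1+t)W_\alpha(t;q^2)\,d_qt=1$. For the transmutation relation I would split $f=f_e+f_o$ and treat each parity separately. Since $W_\alpha(\cdot;q^2)$ is even, the integrals reduce to
\[
V_{\alpha,q}(f_e)(x)=\frac{C(\alpha;q^2)}{2}\int_{-1}^{1}W_\alpha(t;q^2)f_e(xt)\,d_qt,\quad V_{\alpha,q}(f_o)(x)=\frac{C(\alpha;q^2)}{2}\int_{-1}^{1}t\,W_\alpha(t;q^2)f_o(xt)\,d_qt,
\]
which inherit the parity of $f$. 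Differentiation under the $q$-integral is justified by the elementary identity $\partial_q[f(t\,\cdot)](x)=t(\partial_q f)(xt)$ (direct from Lemma \ref{ld}).

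The even case is then easy: since $V_{\alpha,q}(f_e)$ is even, $\Lambda_{\alpha,q}V_{\alpha,q}(f_e)=\partial_q V_{\alpha,q}(f_e)$, and differentiating under the integral produces exactly $V_{\alpha,q}(\partial_q f_e)$, the factor $t$ from differentiation matching the representation of $V_{\alpha,q}$ acting on the odd function $\partial_q f_e$. The odd case is where the real work lies. For $f_o$ odd, the action of $\Lambda_{\alpha,q}$ on odd functions gives $\Lambda_{\alpha,q}V_{\alpha,q}(f_o)(x)=q^{2\alpha+1}\partial_q V_{\alpha,q}(f_o)(x)+[2\alpha+1]_q V_{\alpha,q}(f_o)(x)/x$. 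Setting $h(t):=f_o(xt)/x$ (odd in $t$, with $\partial_q h(t)=(\partial_q f_o)(xt)$), the required identity $\Lambda_{\alpha,q}V_{\alpha,q}(f_o)=V_{\alpha,q}(\partial_q f_o)$ is equivalent to
\[
\int_{-1}^{1}\bigl(q^{2\alpha+1}t^2-1\bigr)W_\alpha(t;q^2)\,\partial_q h(t)\,d_qt=-[2\alpha+1]_q\int_{-1}^{1}t\,W_\alpha(t;q^2)\,h(t)\,d_qt.
\]

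I would obtain this by $q$-integration by parts (Lemma \ref{ppar}, part 1, with $a=1$). The boundary term vanishes because $W_\alpha(q^{-1};q^2)=0$: indeed, the numerator in (\ref{w}) contains the factor $(q^{-2}\cdot q^2;q^2)_\infty=(1;q^2)_\infty=0$. The problem thus reduces to the Pearson-type identity
\[
\partial_q\bigl[(q^{2\alpha+1}t^2-1)W_\alpha(t;q^2)\bigr]=[2\alpha+1]_q\,t\,W_\alpha(t;q^2),
\]
which I would verify by a direct computation from (\ref{w}) using the elementary shift $W_\alpha(q^{-1}t;q^2)=\frac{1-t^2}{1-q^{2\alpha-1}t^2}W_\alpha(t;q^2)$ (itself a consequence of $(a;q^2)_\infty=(1-a)(aq^2;q^2)_\infty$). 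The main obstacle is locating this Pearson-type relation for the weight $W_\alpha$; everything else is parity bookkeeping.
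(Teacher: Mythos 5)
Your proposal is correct and follows essentially the same route as the paper: parity splitting, $q$-differentiation under the integral sign, and a $q$-integration by parts reducing the odd case to the Pearson-type identity $\partial_q\left[(q^{2\alpha+1}t^2-1)W_\alpha(t;q^2)\right]=[2\alpha+1]_q\, t\, W_\alpha(t;q^2)$, which the paper records in the equivalent two-line form $\partial_q\left[(1-q^2t^2)W_\alpha(qt;q^2)\right]=-[2\alpha+1]_q t W_\alpha(t;q^2)$ together with $(1-q^2t^2)W_\alpha(qt;q^2)=(1-q^{2\alpha+1}t^2)W_\alpha(t;q^2)$. One tiny caveat: for $\alpha=\frac{1}{2}$ one has $W_{1/2}\equiv 1$, so your claim $W_\alpha(q^{-1};q^2)=0$ fails there; the boundary term still vanishes, but because of the accompanying factor $q^{2\alpha-1}-1$ rather than the weight itself.
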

\begin{proof}
i) follows from the relation (\ref{rym}).\\
ii) Let $f= f_o + f_e \in \mathcal{E}_{q}(\R_q) $, we have on the one hand\\
$V_{\alpha ,q} (\partial _qf)(x) =  \ds \frac{C(\alpha ;q^2)}{2}
\ds\int_{-1}^1W_\alpha (t;q^2)\partial _q f_o(xt) d_qt +  \ds
\frac{C(\alpha ;q^2)}{2}  \ds\int_{-1}^1W_\alpha (t;q^2)t\partial
_q f_e(xt) d_qt.$\\ On the other hand, we have
\begin{eqnarray*}\Lambda _{\alpha , q}V_{\alpha ,q} (f) (x) &=&
\frac{C(\alpha ;q^2)}{2} \int_{-1}^1W_\alpha (t;q^2)t\partial _q
f_e(xt) d_qt + \frac{q^{2\alpha +1}C(\alpha ;q^2)}{2}
\int_{-1}^1W_\alpha (t;q^2)t^2 \partial _q f_o(xt) d_qt \\&+&
 \frac{[2\alpha +1]_qC(\alpha ;q^2)}{2x}  \ds\int_{-1}^1W_\alpha
(t;q^2)t f_o(xt) d_qt.
\end{eqnarray*}
Now, using a $q$-integration by parts and the facts that
$$ \partial_q\left[(1-q^2t^2)W_\alpha(qt;q^2)\right]=-[2\alpha
+1]_qtW_\alpha(t;q^2)$$ and
$$(1-q^2t^2)W_\alpha (qt;q^2) =(1-t^2q^{2\alpha +1}) W_\alpha
(t;q^2),$$ we get
\begin{eqnarray*}[2\alpha +1]_q \ds \frac{C(\alpha
;q^2)}{2x} \int_{-1}^1W_\alpha (t;q^2)t f_o(xt) d_qt &=&
\frac{C(\alpha ;q^2)}{2}\ds\int_{-1}^1(1-q^2t^2)W_\alpha (qt;q^2)
\partial _q f_o(xt) d_qt\\ &=&\frac{C(\alpha ;q^2)}{2}\ds\int_{-1}^1(1-t^2q^{2\alpha +1}) W_\alpha
(t;q^2)
\partial _q f_o(xt) d_qt,\end{eqnarray*}
which completes the proof.
\end{proof}
\begin{theorem}\label{vrq}
For all $f\in \mathcal{E}_q(\R_q)$, we have
\begin{equation}\label{vr}
    \forall x \in \R_q , V_{\alpha ,q} (f)(x) = R_{\alpha ,q} (f_e)(x) +
    \partial _q  R_{\alpha ,q} I_q(f_o)(x),
\end{equation}
where $R_{\alpha ,q}$ is given by (\ref{R}) and $I_q $ is the operator given by \\
$$ \forall x \in \R_q , I_q(f_o)(x) = \ds\int_0^{|qx|}f_o(t)
d_qt.$$
\end{theorem}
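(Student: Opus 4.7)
The plan is to expand the definition of $V_{\alpha,q}$ along the decomposition $f=f_e+f_o$ and the splitting $1+t=1+t$, and then exploit the evenness of $W_\alpha(\cdot;q^2)$ in~$t$. This gives
\begin{eqnarray*}
V_{\alpha,q}(f)(x) &=& \frac{C(\alpha;q^2)}{2}\int_{-1}^{1}W_\alpha(t;q^2)\bigl[f_e(xt)+t f_e(xt)+f_o(xt)+t f_o(xt)\bigr]\,d_qt.
\end{eqnarray*}
Since the symmetric $q$-Jackson integral $\int_{-1}^{1}$ annihilates functions of $t$ that are odd, and $W_\alpha(t;q^2)$ is even in~$t$, the terms $W_\alpha(t;q^2)\,t\,f_e(xt)$ and $W_\alpha(t;q^2)\,f_o(xt)$ contribute nothing. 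Thus only two terms survive, and by the definition (\ref{R}) of $R_{\alpha,q}$ the first of them equals $R_{\alpha,q}(f_e)(x)$, so the claim reduces to the identity
\[
\frac{C(\alpha;q^2)}{2}\int_{-1}^{1}W_\alpha(t;q^2)\,t\,f_o(xt)\,d_qt \;=\; \partial_q R_{\alpha,q}I_q(f_o)(x).
\]

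To establish this remaining identity, I would first observe that $I_q(f_o)$ is an even function on $\R_q$ (its value depends only on $|qx|$), so $H:=R_{\alpha,q}I_q(f_o)$ is even as well, and by Lemma~\ref{ld}
\[
\partial_q H(x)=\frac{H(q^{-1}x)-H(x)}{(1-q)x}.
\]
I would then compute $H(q^{-1}x)-H(x)$ by bringing the difference inside the $q$-integral defining $R_{\alpha,q}$:
\[
H(q^{-1}x)-H(x)=\frac{C(\alpha;q^2)}{2}\int_{-1}^{1}W_\alpha(s;q^2)\bigl[I_q(f_o)(q^{-1}xs)-I_q(f_o)(xs)\bigr]\,d_qs.
\]
The key auxiliary identity is the following $q$-analogue of the fundamental theorem of calculus: for every $y\in\R_q$,
\[
I_q(f_o)(q^{-1}y)-I_q(f_o)(y)=\int_{q|y|}^{|y|}f_o(u)\,d_qu=(1-q)|y|f_o(|y|)=(1-q)y f_o(y),
\]
where the last equality uses the oddness of $f_o$ (so that $u\,f_o(u)=|u|f_o(|u|)$). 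Applying this with $y=xs$ and then dividing by $(1-q)x$ yields exactly
\[
\partial_q H(x)=\frac{C(\alpha;q^2)}{2}\int_{-1}^{1}W_\alpha(s;q^2)\,s\,f_o(xs)\,d_qs,
\]
completing the proof.

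The only real subtlety is the sign bookkeeping in the $q$-FTC step and the verification that the formula $\partial_q H(x)=\frac{H(q^{-1}x)-H(x)}{(1-q)x}$ (valid because $H$ is even) indeed produces an \emph{odd} function of $x$, matching the odd-in-$x$ right-hand side; this is automatic once one notices $f_o(xs)$ is odd in $x$ and the rest of the integrand is even in $s$. No convergence issues arise since $f\in\mathcal{E}_q(\R_q)$ makes all sums absolutely convergent on the relevant compact $q$-lattices.
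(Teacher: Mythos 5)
Your proposal is correct and follows essentially the same route as the paper: the same expansion of $(1+t)(f_e(xt)+f_o(xt))$ with the odd-in-$t$ terms killed by the symmetric $q$-integral, followed by the identification of the remaining term with $\partial_q R_{\alpha,q}I_q(f_o)$. The only difference is cosmetic: the paper gets the second identity by $q$-differentiating under the $q$-integral sign and invoking $\partial_q(I_qf_o)=f_o$, whereas you verify that same fact from scratch via the even-function formula for $\partial_q$ and an explicit $q$-fundamental-theorem computation.
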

\begin{proof}From the definitions of the $q-$Dunkl intertwining
 and the $q$-Riemann-Liouville operators, we have
\begin{eqnarray*}
  V_{\alpha ,q} (f)(x) &=& \ds \frac{C(\alpha ;q^2)}{2}
\ds\int_{-1}^1W_\alpha (t;q^2)(1+t) (f_o(xt) + f_e(xt)) d_qt \\
  &=& \ds \frac{C(\alpha ;q^2)}{2}
\ds\int_{-1}^1W_\alpha (t;q^2) f_e(xt) d_qt +  \ds \frac{C(\alpha
;q^2)}{2}
\ds\int_{-1}^1W_\alpha (t;q^2)t f_o(xt)  d_qt. \\
   &=& R_{\alpha ,q} (f_e)(x)+ \ds \frac{C(\alpha
;q^2)}{2} \ds\int_{-1}^1W_\alpha (t;q^2)t f_o(xt)  d_qt.
\end{eqnarray*}
On the other hand, by $q$-derivation under the $q$-integral sign
and the fact that\\ $\partial _q (I_qf_o) = f_o$,  we obtain
{\small\begin{eqnarray*}
    \partial _q \left[R_{\alpha ,q} I_q(f_o)\right](x)=  \frac{C(\alpha
;q^2)}{2} \ds\int_{-1}^1W_\alpha (t;q^2)t \partial _q (I_qf_o)(xt)
d_qt= \frac{C(\alpha ;q^2)}{2} \ds\int_{-1}^1W_\alpha (t;q^2)t
f_o(xt) d_qt.
\end{eqnarray*}}
This gives the result.
\end{proof}
\begin{theorem}\label{ines}
The transform $V_{\alpha ,q} $ is an isomorphism from
$\mathcal{E}_q(\R_q)$ onto itself, its  inverse transform is given
by
\begin{equation}\label{vinvers}
     \forall x \in \R_q , V_{\alpha ,q}^{-1} (f)(x) = R_{\alpha ,q}^{-1} (f_e)(x) +
    \partial _q \left( R_{\alpha ,q}^{-1} I_q(f_o)\right)(x),
\end{equation}
where $R_{\alpha ,q}^{-1}$ is the inverse transform of $R_{\alpha
,q}$.
\end{theorem}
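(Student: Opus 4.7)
The plan is to use the decomposition from Theorem \ref{vrq} to reduce the invertibility of $V_{\alpha,q}$ on $\mathcal{E}_q(\R_q)$ to the already-established invertibility of $R_{\alpha,q}$ on the even subspace $\mathcal{E}_{*,q}(\R_q)$, together with a pair of local inverse relations between $\partial_q$ and $I_q$.

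First, I would observe that $V_{\alpha,q}$ respects the parity decomposition: writing $f=f_e+f_o$ and using Theorem \ref{vrq},
\[
V_{\alpha,q}(f)=R_{\alpha,q}(f_e)+\partial_q R_{\alpha,q} I_q(f_o),
\]
where $R_{\alpha,q}(f_e)$ is even and (since $I_q$ sends odd functions to even ones, $R_{\alpha,q}$ preserves parity on $\mathcal{E}_{*,q}(\R_q)$, and $\partial_q$ of an even function is odd by Lemma \ref{ld}) the term $\partial_q R_{\alpha,q} I_q(f_o)$ is odd. So finding a preimage of $g=g_e+g_o$ reduces to solving the two decoupled equations $R_{\alpha,q}(f_e)=g_e$ and $\partial_q R_{\alpha,q} I_q(f_o)=g_o$. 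The first is immediate from the preceding theorem: $f_e=R_{\alpha,q}^{-1}(g_e)\in\mathcal{E}_{*,q}(\R_q)$.

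For the second equation I set $f_o=\partial_q R_{\alpha,q}^{-1}I_q(g_o)$ and verify the cancellations. The key computational step is a pair of inverse identities for $\partial_q$ and $I_q$: for every odd $h\in\mathcal{E}_q(\R_q)$ one has $\partial_q I_q(h)=h$, and for every even $H\in\mathcal{E}_q(\R_q)$ with $H(0)=0$ one has $I_q(\partial_q H)=H$. Both are straightforward telescoping computations on the Jackson sum using the explicit forms of $\partial_q$ on even/odd functions in Lemma \ref{ld}; the vanishing condition $H(0)=0$ is automatically satisfied in the combinations we encounter because $I_q(\cdot)(0)=0$ and because $R_{\alpha,q}$ and $R_{\alpha,q}^{-1}$ both preserve the value at the origin (this follows from $V_{\alpha,q}(1)=1$, which forces $R_{\alpha,q}(1)=1$). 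Using these identities, one computes
\[
V_{\alpha,q}\!\left(R_{\alpha,q}^{-1}(g_e)+\partial_q R_{\alpha,q}^{-1}I_q(g_o)\right)
=g_e+\partial_q R_{\alpha,q} I_q\!\left(\partial_q R_{\alpha,q}^{-1}I_q(g_o)\right)=g_e+g_o,
\]
and similarly checks the other composition. This simultaneously proves surjectivity and gives the formula (\ref{vinvers}); injectivity follows by running the argument in reverse, noting that if $R_{\alpha,q} I_q(f_o)$ is an even element of $\mathcal{E}_q(\R_q)$ vanishing at $0$ whose $\partial_q$ is $0$, then the formula for $\partial_q$ on even functions forces it to be constant along $q$-orbits and hence identically zero.

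Finally, the topological part is obtained from the continuity of $R_{\alpha,q}$ (Lemma \ref{cont}) and of $R_{\alpha,q}^{-1}$ (contained in the previous theorem), together with the evident continuity of $\partial_q$, $I_q$, and the parity projectors $f\mapsto f_e,\,f_o$ on $\mathcal{E}_q(\R_q)$ with respect to the seminorms $P_{n,a}$. The main technical point, and the only place where care is required, is the pair of $\partial_q$--$I_q$ inverse identities and the verification that the constant of integration really is zero in the combinations appearing in the formula; everything else is a formal consequence of the decomposition in Theorem \ref{vrq} and the invertibility of $R_{\alpha,q}$.
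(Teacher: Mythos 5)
Your proposal is correct and follows essentially the same route as the paper: both use the decomposition $V_{\alpha,q}(f)=R_{\alpha,q}(f_e)+\partial_q R_{\alpha,q}I_q(f_o)$ from Theorem \ref{vrq}, the parity separation, and the inverse identities $\partial_q I_q(h)=h$ and $I_q(\partial_q H)=H-\lim_{t\to 0}H(t)$ (with the constant of integration vanishing because $I_q(\cdot)(0)=0$ and $R_{\alpha,q}$ preserves the value at the origin) to verify that the stated operator is a two-sided inverse. The only difference is cosmetic: you solve the decoupled even/odd equations while the paper directly composes the candidate inverse $H$ with $V_{\alpha,q}$, and you add a brief continuity remark the paper omits.
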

\begin{proof}
Let H be the operator defined on $\mathcal{E}_q(\R_q)$ by\\
$$H(f) = R_{\alpha ,q}^{-1} (f_e) +
    \partial _q  (R_{\alpha ,q}^{-1} I_q(f_o)).$$
 We have
$ V_{\alpha ,q} (f) = R_{\alpha ,q} (f_e) +
    \partial _q  \left(R_{\alpha ,q} I_q(f_o)\right)$ ,  $  R_{\alpha ,q} (f_e)$ is even
    and
    $\partial _q ( R_{\alpha ,q} I_q(f_o))$ is odd, then
\begin{eqnarray*}
  H  V_{\alpha ,q} (f)&=& R_{\alpha ,q}^{-1}R_{\alpha ,q}f_e +
  \partial _qR_{\alpha ,q}^{-1}I_q(\partial _q  R_{\alpha ,q} I_q(f_o))\\
   &=& f_e+ \partial _qR_{\alpha ,q}^{-1}I_q(\partial _q  R_{\alpha ,q}
   I_q(f_o)).
\end{eqnarray*}
Using the fact that for $\varphi\in\mathcal{E}_{*,q}(\R_q)$, $\ds
I_q(\partial _q \varphi)(x) = \varphi (x) - \lim_{t\rightarrow
0}\varphi (t)$, we obtain
  $$I_q(\partial _q  R_{\alpha ,q} I_q(f_o)) =  R_{\alpha ,q} I_q(f_o).   $$
  So,$$ R_{\alpha ,q}^{-1}I_q(\partial _q  R_{\alpha ,q}
I_q(f_o))=I_q(f_o)
  $$ and $$ \partial _qR_{\alpha ,q}^{-1}I_q(\partial _q  R_{\alpha ,q} I_q(f_o))=
  \partial _q I_q(f_0) =
 f_0.$$
Thus, $$ H  V_{\alpha ,q} (f) = f_e + f_o = f.$$ With the same
technique, we prove that $ V_{\alpha ,q}H (f) = f.$
\end{proof}
\begin{defin} For $f \in \mathcal{D}_q (\R_q)$ and  $\ds  \alpha
>-\frac{1}{2}$, we define the $q$-transpose of $V_{\alpha ,q}$
by
\begin{equation}\label{tV}
   (^tV_{\alpha ,q})(f)(t) = M_{\alpha,q}   \ds\int_{|x |\geq q|t|}
  W_\alpha \left(\frac{t}{x};q^2\right)\left(1+ \frac{t}{x}\right)f(x) \frac{|x|^{2\alpha +1}}{x}d_qx
,\end{equation} where $W_\alpha(.;q^2)$ is given by (\ref{w}) and
\begin{equation}\label{MtV}
    M_{\alpha,q} =  \frac{(1+q)^{-\alpha +\frac{1}{2}}}{2\Gamma _{q^2}(\alpha
    +\frac{1}{2})}.
\end{equation}\end{defin}
Note that by simple computation, we obtain for $f \in
\mathcal{E}_{q}(\R_q)$ and $g \in \mathcal{D}_{q}(\R_q)$
\begin{equation}\label{vtv}
\ds\frac{c_{\alpha ,q}}{2} \ds\int_{-\infty}^{+\infty}V_{\alpha
,q}(f)(x)g(x)|x|^{2\alpha +1}d_qx = K
\ds\int_{-\infty}^{+\infty}f(t)(^tV_{\alpha ,q})(g)(t) d_qt.
\end{equation}

\begin{propo}
For $f \in \mathcal{D}_{q}(\R_q)$, we have
\begin{equation}\label{tvlabla}
   \partial _q(^tV_{\alpha ,q})(f) = (^tV_{\alpha ,q})(\Lambda _{\alpha ,
   q})(f).
\end{equation}
\end{propo}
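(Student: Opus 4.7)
The plan is to derive (\ref{tvlabla}) as the dual of the transmutation identity $\Lambda_{\alpha,q}V_{\alpha,q}=V_{\alpha,q}\partial_q$ from Theorem \ref{vexp}. I would test both sides of (\ref{tvlabla}) against an arbitrary $\varphi\in\mathcal{D}_q(\R_q)$ and reduce the desired identity to a chain of applications of the pairing (\ref{vtv}), the transmutation (\ref{vlabla}), the antisymmetry (\ref{labla}) of $\Lambda_{\alpha,q}$, and the $q$-integration by parts from Lemma \ref{ppar}(2). Since the $q$-Jackson integral on $\R_q$ is a weighted sum and the indicator function of every single point of $\R_q$ belongs to $\mathcal{D}_q(\R_q)$, any such weak equality automatically upgrades to the pointwise identity on $\R_q$.

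In detail, I would start from $K\int_{-\infty}^{\infty}\varphi(t)\,\partial_q\bigl({}^tV_{\alpha,q}(f)\bigr)(t)\,d_qt$ and move $\partial_q$ onto $\varphi$ using Lemma \ref{ppar}(2), picking up a sign. Applying (\ref{vtv}) with the test function $\partial_q\varphi\in\mathcal{E}_q(\R_q)$ and $f\in\mathcal{D}_q(\R_q)$ would then rewrite the expression as $-\tfrac{c_{\alpha,q}}{2}\int V_{\alpha,q}(\partial_q\varphi)(x)\,f(x)\,|x|^{2\alpha+1}\,d_qx$. The transmutation (\ref{vlabla}) converts $V_{\alpha,q}(\partial_q\varphi)$ into $\Lambda_{\alpha,q}V_{\alpha,q}(\varphi)$, and the antisymmetry (\ref{labla}) of $\Lambda_{\alpha,q}$ with respect to the measure $|x|^{2\alpha+1}d_qx$ then moves $\Lambda_{\alpha,q}$ from $V_{\alpha,q}(\varphi)$ onto $f$, producing a second sign change that cancels the first. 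A final appeal to (\ref{vtv}), legitimate because property iv of the proposition on $\Lambda_{\alpha,q}$ guarantees $\Lambda_{\alpha,q}f\in\mathcal{D}_q(\R_q)$, would convert the result back into $K\int\varphi(t)\,{}^tV_{\alpha,q}(\Lambda_{\alpha,q}f)(t)\,d_qt$, yielding the weak identity.

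The main obstacle I expect is the bookkeeping required to ensure the $q$-integrals converge absolutely at each step so that the two integration-by-parts formulae and the implicit Fubini exchanges are justified. This should not cause real trouble here: since $\varphi$ and $f$ both have compact support on $\R_q$, each $q$-integral in the chain reduces, once an outer variable is fixed, to a finite sum involving only bounded values of the $W_\alpha$-kernel and of $V_{\alpha,q}(\varphi)$ or ${}^tV_{\alpha,q}(f)$. Once the weak identity is established, specializing $\varphi$ to the characteristic function of a singleton $\{t_0\}\subset\R_q$ extracts (\ref{tvlabla}) at $t_0$, and since $t_0$ is arbitrary the proof is complete.
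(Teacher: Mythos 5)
Your proposal is correct and follows essentially the same route as the paper: the authors also pair $\partial_q({}^tV_{\alpha,q}f)$ against a test function (they take $g\in\mathcal{E}_q(\R_q)$ rather than $\mathcal{D}_q(\R_q)$), move $\partial_q$ across by Lemma \ref{ppar}, invoke (\ref{vtv}), the transmutation (\ref{vlabla}) and the antisymmetry (\ref{labla}), and then apply (\ref{vtv}) once more before concluding from the arbitrariness of the test function. Your remark that indicator functions of singletons lie in $\mathcal{D}_q(\R_q)$ makes the final pointwise upgrade explicit, which the paper leaves implicit.
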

\begin{proof}
Using a $q$-integration by parts and the relations  (\ref{vtv}),
(\ref{vlabla}) and (\ref{labla}), we get for all $f \in
\mathcal{D}_{q}(\R_q)$ and $g \in \mathcal{E}_{q}(\R_q)$,
\begin{eqnarray*}
  K \ds\int_{-\infty}^{+\infty}g(x)\partial _q (^tV_{\alpha ,q})f(x) d_qx&=& -K
   \ds\int_{-\infty}^{+\infty} \partial _q g(x) (^tV_{\alpha ,q})f(x) d_qx\\
  &=& -\frac{ c_{\alpha ,q}}{2}\ds\int_{-\infty}^{+\infty}V_{\alpha ,q}(\partial _q g)(x)
  f(x)|x|^{2\alpha +1} d_qx \\
  &=& -\frac{ c_{\alpha ,q}}{2}\ds\int_{-\infty}^{+\infty} \Lambda
_{\alpha , q}(V_{\alpha ,q}g)(x) f(x)|x|^{2\alpha +1} d_qx\\
  &=&  \frac{c_{\alpha ,q}}{2}\ds\int_{-\infty}^{+\infty} V_{\alpha ,q}(g)(x) \Lambda
_{\alpha , q}f(x)|x|^{2\alpha +1} d_qx \\
  &=&  K \ds\int_{-\infty}^{+\infty}g(x)(^tV_{\alpha ,q})(\Lambda
_{\alpha , q}f)(x)d_qx.
\end{eqnarray*} As $g$ is arbitrary in $\mathcal{E}_{q}(\R_q)$, we obtain the result.
\end{proof}
\begin{theorem}
For $f \in \mathcal{D}_{q}(\R_q)$, we have
\begin{equation}\label{vr}
    \forall x \in \R_q,  (^tV_{\alpha ,q}) (f)(x) =  (^tR_{\alpha ,q}) (f_e)(x) +
    \partial _q  \left[^tR_{\alpha ,q} J_q(f_o)\right](x),
\end{equation}
where $ ^tR_{\alpha ,q}$ is given by (\ref{tR}) and $J_q$ is the
operator defined by
$$J_q(f_o)(x) = \ds \int_{-\infty}^{qx} f_o(x) d_qx.$$

\end{theorem}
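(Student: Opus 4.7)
The plan is to verify the identity weakly by pairing both sides against an arbitrary $h = h_e + h_o\in\mathcal{E}_q(\R_q)$, and to reduce the pairing, via the defining duality (\ref{vtv}) between $V_{\alpha,q}$ and $^tV_{\alpha,q}$ together with Theorem \ref{vrq}, to the already-established duality (\ref{relation}) between $R_{\alpha,q}$ and $^tR_{\alpha,q}$. Because the $q$-Jackson pairing separates points of $\R_q$ (test against single-atom indicators in $\mathcal{D}_q\subset\mathcal{E}_q$), the weak identity will imply the pointwise claim. Two preparatory $J_q$-identities are needed and follow from direct $q$-integral computations: $J_q(f_o)$ is even in $x$ (since $\int_{-\infty}^{\infty}f_o\,d_qu = 0$ implies $J_q(f_o)(-x)=J_q(f_o)(x)$), and $\partial_q J_q(f_o) = f_o$ (via the one-term Jackson identity $\int_{qx}^{x}f_o(u)\,d_qu = (1-q)x\,f_o(x)$).

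\textbf{Even test function.} For $h = h_e$, Theorem \ref{vrq} gives $V_{\alpha,q}(h_e)=R_{\alpha,q}(h_e)$, which is even. Applying (\ref{vtv}) and retaining only the $f_e$-contribution (parity selection under the even weight $|x|^{2\alpha+1}$) yields
\begin{equation*}
K\int h_e\,(^tV_{\alpha,q})(f)\,d_qt = \tfrac{c_{\alpha,q}}{2}\int R_{\alpha,q}(h_e)(x)\,f_e(x)\,|x|^{2\alpha+1}\,d_qx = K\int h_e\,(^tR_{\alpha,q})(f_e)\,d_qt,
\end{equation*}
where the last step is (\ref{relation}). Since $^tR_{\alpha,q}J_q(f_o)$ is even and therefore $\partial_q(^tR_{\alpha,q}J_q(f_o))$ is odd in $t$ (Lemma \ref{ld}), this term pairs to zero with $h_e$, completing the even case.

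\textbf{Odd test function.} For $h = h_o$, Theorem \ref{vrq} gives $V_{\alpha,q}(h_o)=\partial_q R_{\alpha,q}I_q(h_o)$, odd, so (\ref{vtv}) together with the parity selection yields
\begin{equation*}
K\int h_o\,(^tV_{\alpha,q})(f)\,d_qt = \tfrac{c_{\alpha,q}}{2}\int \partial_q R_{\alpha,q}I_q(h_o)(x)\,f_o(x)\,|x|^{2\alpha+1}\,d_qx.
\end{equation*}
On the claimed right-hand side, $(^tR_{\alpha,q})(f_e)$ is even and pairs to zero with $h_o$, so the remaining task is to show that $K\int h_o\,\partial_q(^tR_{\alpha,q}J_q(f_o))\,d_qt$ equals the display above. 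The chain is: (i) integrate by parts (Lemma \ref{ppar}(2)) to move $\partial_q$ onto $h_o$; (ii) apply (\ref{relation}) to the pair of even functions $\partial_q h_o$ and $J_q(f_o)$; (iii) use the transmutation (\ref{trans1}) and $\partial_q I_q = \mathrm{id}$ on odd functions to rewrite $R_{\alpha,q}(\partial_q h_o) = R_{\alpha,q}(\partial_q^2 I_q(h_o)) = \Delta_{\alpha,q}R_{\alpha,q}I_q(h_o)$; (iv) expand $|x|^{2\alpha+1}\Delta_{\alpha,q}u = \partial_q[|x|^{2\alpha+1}\partial_q u]$ and integrate by parts once more; (v) conclude via $\partial_q J_q(f_o) = f_o$, with the two minus signs from the successive IBPs cancelling.

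\textbf{Main obstacle.} The odd case is where all the delicate bookkeeping lies: by Lemma \ref{ld} the operator $\partial_q$ is given by different formulas on even and odd functions, so at every step one must verify the parity of each factor to legitimate each integration by parts and to ensure that boundary terms vanish (here guaranteed by $f_o,h_o\in\mathcal{D}_q(\R_q)$ having compact support, so that $J_q(f_o)$ and $I_q(h_o)$ stabilise at infinity). Once the weak identity is checked for every $h\in\mathcal{E}_q(\R_q)$, the pointwise identity on $\R_q$ follows by the point-separating property of the $q$-Jackson pairing.
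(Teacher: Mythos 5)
Your proposal is correct and follows essentially the same route as the paper: both verify the identity weakly through the duality (\ref{vtv}), substitute the decomposition of Theorem \ref{vrq}, split by parity, and in the odd part run the same chain of $q$-integrations by parts, the duality (\ref{relation}), a transmutation relation, and $\partial_q J_q=\mathrm{id}$ on odd functions. The only cosmetic difference is direction: you start from the claimed $^tR_{\alpha,q}$ expression and work back to $V_{\alpha,q}$ using (\ref{trans1}), whereas the paper goes forward from $V_{\alpha,q}(g)$ and invokes the adjoint relation (\ref{trans2}).
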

\begin{proof}
Let $f,g \in \mathcal{D}_q(\R_q)$, using Theorem \ref{vrq}, the
relation (\ref{relation}) and  a $q$-integration by parts, we obtain\\
$\ds\frac{c_{\alpha ,q}}{2} \ds\int_{-\infty}^{+\infty}V_{\alpha
,q}(g)(x)f(x)|x|^{2\alpha +1}d_qx = \ds\frac{c_{\alpha ,q}}{2}
\ds\int_{-\infty}^{+\infty}\left[ R_{\alpha ,q} (g_e)(x) +
    \partial _q  R_{\alpha ,q} I_q(g_o)(x)\right]f(x)|x|^{2\alpha
    +1}d_qx$\\
$= \ds\frac{c_{\alpha ,q}}{2} \ds\int_{-\infty}^{+\infty}
R_{\alpha ,q} (g_e)(x).f_e(x).|x|^{2\alpha +1}d_qx +
\ds\frac{c_{\alpha ,q}}{2} \ds\int_{-\infty}^{+\infty}\partial _q
R_{\alpha ,q} I_q(g_o)(x).f_o(x).|x|^{2\alpha +1}d_qx$\\
$=K \ds\int_{-\infty}^{+\infty} (^tR_{\alpha ,q})
(f_e)(x).g_e(x)d_qx -\ds\frac{c_{\alpha ,q}}{2}
\ds\int_{-\infty}^{+\infty} R_{\alpha ,q} I_q(g_o)(x).\partial _q
\left[f_o(x).|x|^{2\alpha +1}\right]d_qx.$\\
It is easily seen that the map $J_q$ is bijective from
$\mathcal{D}_q^\ast(\R_q)$ onto $\mathcal{D}_{\ast ,q}(\R_q)$ and
$J_q^{-1}=\partial _q$, where $\mathcal{D}_q^\ast(\R_q)$ is the
subspace of $\mathcal{D}_q(\R_q)$ constituted of odd functions.\\
Hence, by writing $f_o = \partial _q J_qf_o$ and by making use
 of (\ref{trans1}) and (\ref{relation}) we get
{\small\begin{eqnarray*} &~~&\frac{c_{\alpha ,q}}{2}
\ds\int_{-\infty}^{+\infty} R_{\alpha ,q} I_q(g_o)(x).\partial _q
\left[f_o(x).|x|^{2\alpha +1}\right]d_qx\\& =& \ds\frac{c_{\alpha
,q}}{2} \int_{-\infty}^{+\infty} R_{\alpha ,q}
I_q(g_o)(x).\ds\frac{1}{|x|^{2\alpha +1}}\partial _q
\left[|x|^{2\alpha +1}\partial _q J_qf_o(x)\right]|x|^{2\alpha
+1}d_qx\\&=& \frac{c_{\alpha ,q}}{2} \int_{-\infty}^{+\infty}
R_{\alpha ,q} I_q(g_o)(x).\Delta_{\alpha,q}J_qf_o(x).|x|^{2\alpha
+1}d_qx= K\ds\int_{-\infty}^{+\infty} I_q(g_o)(x).^tR_{\alpha
,q}\Delta_{\alpha,q}J_qf_o(x)d_qx\\&=& K\int_{-\infty}^{+\infty}
I_q(g_o)(x).\partial _q^2(^tR_{\alpha
,q})J_qf_o(x)d_qx=-K\int_{-\infty}^{+\infty}\partial_q
I_q(g_o)(x).\partial _q(^tR_{\alpha ,q})J_qf_o(x)d_qx.
\end{eqnarray*}} Since $\partial_q I_q(g_o)(x)=
g_o(x)$,
then\\
$\ds\frac{c_{\alpha ,q}}{2} \ds\int_{-\infty}^{+\infty}V_{\alpha
,q}(g)(x)f(x)|x|^{2\alpha +1}d_qx = K\ds\int_{-\infty}^{+\infty}
g(x)\left[ (^tR_{\alpha ,q})f_e(x) +\partial _q(^tR_{\alpha
,q})J_qf_o(x)\right]d_qx.$\\
As $g $ is arbitrary in $\mathcal{D}_q(\R_q)$, this relation when
combined with (\ref{vtv}) gives  the result.
\end{proof}
\begin{theorem} The transform $(^tV_{\alpha ,q}) $ is an isomorphism from
$\mathcal{D}_q(\R_q)$ onto itself, its  inverse transform is given
by
\begin{equation}\label{tvinvers}
     \forall x \in \R_q , (^tV_{\alpha ,q})^{-1} (f)(x) = (^tR_{\alpha ,q})^{-1} (f_e)
     (x) +
    \partial _q  \left[(^tR_{\alpha ,q})^{-1} J_q(f_o)\right](x),
\end{equation}
where $(^tR_{\alpha ,q})^{-1}$ is the inverse transform of ~\quad
$^tR_{\alpha ,q}$.
\end{theorem}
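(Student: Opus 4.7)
The plan is to mirror the argument used for Theorem \ref{ines}, replacing the pair $(R_{\alpha,q},I_q)$ by the pair $({}^tR_{\alpha,q},J_q)$. Define the candidate inverse $H$ on $\mathcal{D}_q(\R_q)$ by
\[
H(f)(x)=({}^tR_{\alpha,q})^{-1}(f_e)(x)+\partial_q\!\left[({}^tR_{\alpha,q})^{-1}J_q(f_o)\right]\!(x).
\]
First I would check that $H$ actually sends $\mathcal{D}_q(\R_q)$ into itself. Since $({}^tR_{\alpha,q})$ is an isomorphism of $\mathcal{D}_{*,q}(\R_q)$ by the theorem on the $q$-Weyl operator, so is its inverse; and by the remark recorded inside the proof of the previous theorem, $J_q$ is a bijection from $\mathcal{D}_q^\ast(\R_q)$ (odd part of $\mathcal{D}_q(\R_q)$) onto $\mathcal{D}_{*,q}(\R_q)$ with $J_q^{-1}=\partial_q$. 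Hence $({}^tR_{\alpha,q})^{-1}(f_e)$ is even and compactly supported, while $\partial_q[({}^tR_{\alpha,q})^{-1}J_q(f_o)]$ is the $\partial_q$-image of an even compactly supported function, so it is odd and compactly supported; their sum is in $\mathcal{D}_q(\R_q)$.

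Next I would verify $H\circ({}^tV_{\alpha,q})=\mathrm{Id}$. By the previous theorem,
\[
({}^tV_{\alpha,q})(f)=({}^tR_{\alpha,q})(f_e)+\partial_q\!\left[{}^tR_{\alpha,q}J_q(f_o)\right],
\]
and the two summands are respectively even and odd (since $J_q(f_o)$ is even, $({}^tR_{\alpha,q})$ preserves parity on its domain, and $\partial_q$ swaps parity). Setting $\varphi:={}^tR_{\alpha,q}J_q(f_o)\in\mathcal{D}_{*,q}(\R_q)$, applying $H$ to $({}^tV_{\alpha,q})(f)$ gives
\[
H({}^tV_{\alpha,q}(f))=f_e+\partial_q\!\left[({}^tR_{\alpha,q})^{-1}J_q(\partial_q\varphi)\right].
\]
Since $\varphi$ is even with compact support, the identity $J_q^{-1}=\partial_q$ on $\mathcal{D}_{*,q}(\R_q)$ yields $J_q(\partial_q\varphi)=\varphi$, so $({}^tR_{\alpha,q})^{-1}J_q(\partial_q\varphi)=J_q(f_o)$, and therefore $\partial_q J_q(f_o)=f_o$. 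Thus $H({}^tV_{\alpha,q}(f))=f_e+f_o=f$. The reverse composition $({}^tV_{\alpha,q})\circ H=\mathrm{Id}$ is obtained in the same way, with the roles of $({}^tR_{\alpha,q})$ and $({}^tR_{\alpha,q})^{-1}$ interchanged.

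The only delicate point is the absence of a boundary term in the identity $J_q\partial_q=\mathrm{Id}$ on $\mathcal{D}_{*,q}(\R_q)$; this is what substitutes here for the ``$\lim_{t\to 0}\varphi(t)=0$'' trick that appeared in the proof of Theorem \ref{ines}. It is available precisely because compact support on $\R_q$ makes the $q$-integral from $-\infty$ to $qx$ behave as a true antiderivative, and this is exactly the observation on $J_q$ already used (and justified) in the proof of the preceding theorem. Once that identity is in hand, the rest of the argument is a book-keeping exercise on the even/odd decomposition.
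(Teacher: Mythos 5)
Your proposal is correct and follows exactly the route the paper takes: the paper's proof consists of invoking the identity $J_q\partial_q f=f$ on $\mathcal{D}_{\ast,q}(\R_q)$ and "proceeding as in Theorem \ref{ines}," which is precisely the even/odd bookkeeping with the pair $({}^tR_{\alpha,q},J_q)$ that you carry out explicitly. Your write-up merely supplies the details the paper leaves implicit, including the correct observation that compact support is what eliminates the boundary term that had to be handled separately in Theorem \ref{ines}.
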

\begin{proof}
Taking account of the relation $J_q \partial_q f(x) = f(x) $ for
all $f\in \mathcal{D}_{\ast ,q}(\R_q)$ and proceeding as in
Theorem \ref{ines} we obtain the result.
\end{proof}
\section{$q$-Dunkl transform}
 \begin{defin} Define the $q$-Dunkl
transform for $f \in L_{\alpha , q}^1 (\R_q)$ by
\begin{equation}\label{FD}
    F_D^{\alpha ,q}(f)(\lambda) = \ds\frac{c_{\alpha ,q}}{2}\ds\int_{-\infty}^{+\infty}
    f(x) \psi _{-\lambda} ^{\alpha ,q}(x).|x|^{2\alpha +1} d_qx,
\end{equation}
where $c_{\alpha ,q}$ is given by (\ref{c}).
\end{defin}
{\bf Remarks :}\\1) It is easy to see that in the even case
$F_D^{\alpha,q}$ reduces to the $q$-Bessel Fourier transform given
by (\ref{FB}) and in the case $\ds \alpha=-\frac{1}{2}$, it
reduces to the $q^2$-analogue Fourier transform given by
(\ref{fou}). \\
2)  Letting $q\uparrow 1$ subject to the condition (\ref{q}),
gives, at least formally, the classical Bessel-Dunkl transform.\\
 Some properties of the $q$-Dunkl transform are given in the
 following proposition.
 \begin{propo}
 i) If $f \in L_{\alpha ,q}^1 (\R_q)$ then $ F_D^{\alpha ,q}(f)\in L_{q}^\infty(\R_q)$,
  \begin{equation}
\|F_D^{\alpha ,q}(f)\|_{\infty,q} \leq \frac{2c_{\alpha
,q}}{(q;q)_\infty} \| f \|_{1,\alpha ,q}\end{equation} and
\begin{equation*}
\lim_{\lambda\rightarrow \infty}F_D^{\alpha ,q}(f)(\lambda)=0.
\end{equation*}
 ii) For $f \in
L_{\alpha ,q}^1 (\R_q),$\begin{equation}\label{FDlab}
 F_D^{\alpha ,q}(\Lambda _{\alpha , q}f)(\lambda )= i\lambda
F_D^{\alpha ,q}(f)(\lambda ).\end{equation}
 iii) For $f,g \in
L_{\alpha ,q}^1 (\R_q),$
 \begin{eqnarray}\label{symD}
 \ds\int_{-\infty}^{+\infty}F_D^{\alpha ,q}(f)(\lambda )g(\lambda
)|\lambda |^{2\alpha +1}d_q \lambda &=&
\ds\int_{-\infty}^{+\infty}f(x )F_D^{\alpha ,q}(g)(x )|x
|^{2\alpha +1}d_q x.
\end{eqnarray}
\end{propo}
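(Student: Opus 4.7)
The plan is to dispatch the three items in turn, using only the kernel bound (\ref{psib}) and the $q$-integration-by-parts identity (\ref{labla}). For (i), the $L^\infty$ bound results from pulling the absolute value under the $q$-integral in (\ref{FD}) and invoking (\ref{psib}), which immediately produces the stated constant $\frac{2c_{\alpha,q}}{(q;q)_\infty}$. For the decay at infinity, I will set up a $q$-integral dominated convergence argument: (\ref{psib}) supplies the $\lambda$-independent integrable dominator $\frac{4}{(q;q)_\infty}|f(x)||x|^{2\alpha+1}$, while the pointwise vanishing $\psi_{-\lambda}^{\alpha,q}(x)\to 0$ as $|\lambda|\to\infty$ in $\R_q$ (with $x$ fixed) follows from Lemma \ref{asympj} applied to each term in
\[
\psi_{-\lambda}^{\alpha,q}(x)=j_\alpha(-\lambda x;q^2)-\frac{i\lambda x}{[2\alpha+2]_q}\,j_{\alpha+1}(-\lambda x;q^2):
\]
the first summand is $o(1)$, and choosing any $v>1$ in Lemma \ref{asympj} shows that the second is of order $|\lambda x|^{1-v}=o(1)$.

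For (ii), the plan is a single application of the $q$-integration-by-parts identity (\ref{labla}). Proposition \ref{psilabla}(1) gives $\Lambda_{\alpha,q}\psi_{-\lambda}^{\alpha,q}=-i\lambda\,\psi_{-\lambda}^{\alpha,q}$, so moving $\Lambda_{\alpha,q}$ from $f$ onto the kernel under the $q$-integral brings out the scalar $i\lambda$ and produces $i\lambda F_D^{\alpha,q}(f)(\lambda)$. The existence hypothesis of (\ref{labla}) is granted by the $L^1_{\alpha,q}$ assumption together with the uniform boundedness (\ref{psib}) of the kernel.

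For (iii), I apply Fubini to the iterated $q$-Jackson sums. The double integral of the modulus is majorised, via (\ref{psib}), by a constant multiple of $\|f\|_{1,\alpha,q}\|g\|_{1,\alpha,q}<\infty$, so the orders of summation may be exchanged freely, and the kernel symmetry $\psi_{-\lambda}^{\alpha,q}(x)=\psi_{-x}^{\alpha,q}(\lambda)$ from Proposition \ref{psilabla}(2) converts one side of (\ref{symD}) into the other. The only genuinely delicate step in the whole proposition is the Riemann--Lebesgue statement in (i); once one recognises that the $q$-Jackson integral is a weighted series on $\R_q$, however, the required $q$-analogue of dominated convergence is just term-by-term convergence of a dominated series and is standard.
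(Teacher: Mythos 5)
Your proof is correct and follows the same route as the paper's: the kernel bound (\ref{psib}) gives the $L^\infty$ estimate in (i) and the Fubini majorant in (iii), dominated convergence for the $q$-Jackson sum (the paper's ``Lebesgue theorem'') gives the vanishing at infinity, and the antisymmetry relation (\ref{labla}) combined with Proposition \ref{psilabla} gives (ii). The only point where you go beyond the paper's terse argument is in making the pointwise decay $\psi_{-\lambda}^{\alpha,q}(x)\to 0$ explicit via Lemma \ref{asympj} (choosing $v>1$ to kill the factor $\lambda x$ in the second summand), a detail the paper leaves implicit but which is exactly what its appeal to the Lebesgue theorem requires.
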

\begin{proof}
i) Follows  from the definition of $F_D^{\alpha
,q}(f)$, the Lebesgue theorem  and the fact that $\ds |\psi _{-\lambda} ^{\alpha ,q}(x)|\leq \frac{4}{(q;q)_\infty},$ for all $\lambda,~~x\in\R_q.$\\
ii) Using the relation (\ref{labla}) and Proposition
\ref{psilabla},
we obtain the result.\\
iii) Let  $f,g \in L_{\alpha ,q}^1 (\R_q).$ \\ Since for all
$\lambda ,x \in \R_q $,we have $\ds \mid  \psi _\lambda ^{\alpha
,q}(x)\mid \leq \frac{4}{(q;q)_\infty}$,  then
\begin{eqnarray*}
  \ds\int_{-\infty}^{+\infty}\ds\int_{-\infty}^{+\infty} \mid f(x)
g(\lambda ) \psi _\lambda ^{\alpha ,q}(x)| |x |^{2\alpha +1}
|\lambda |^{2\alpha +1}d_q x d_q \lambda  & \leq &
\frac{4}{(q;q)_\infty} \| f\|_{1,\alpha ,q} \| g\|_{1,\alpha ,q}.
\end{eqnarray*}
So, by the Fubini's theorem, we can exchange the order of the
$q$-integrals, which gives the result.
\end{proof}

\begin{theorem}\label{invD}
For all $f\in L_{\alpha ,q}^1 (\R_q)$, we have
\begin{equation}\begin{split}\label{Dinv}
\forall x\in \R_q,\quad f(x)& =\frac{c_{\alpha
,q}}{2}\int_{-\infty}^{+\infty}
    F_D^{\alpha ,q}(f)(\lambda) \psi _{\lambda} ^{\alpha ,q}(x).|\lambda|^{2\alpha +1}
    d_q\lambda\\ &= \overline{F_D^{\alpha ,q}(\overline{F_D^{\alpha ,q}(f)})} (x) .\end{split}\end{equation}
\end{theorem}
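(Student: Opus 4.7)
The plan is to insert the definition of $F_D^{\alpha,q}(f)$ into the right-hand side of (\ref{Dinv}), exchange the two $q$-integrals, and apply the orthogonality relation of Proposition \ref{ortpsi} to collapse the inner $q$-integral into a Kronecker delta that recovers $f(x)$ pointwise.

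More precisely, starting from
$$\frac{c_{\alpha,q}}{2}\int_{-\infty}^{+\infty} F_D^{\alpha,q}(f)(\lambda)\,\psi_\lambda^{\alpha,q}(x)\,|\lambda|^{2\alpha+1}\,d_q\lambda,$$
I would expand $F_D^{\alpha,q}(f)(\lambda)$ using (\ref{FD}) and justify a Fubini-type exchange of $q$-integrals, exactly as in the proof of Proposition \ref{Finv}, relying on the uniform bound (\ref{psib}) and the assumption $f \in L_{\alpha,q}^1(\R_q)$. This yields
$$\frac{c_{\alpha,q}^2}{4}\int_{-\infty}^{+\infty} f(y)\left[\int_{-\infty}^{+\infty}\psi_{-\lambda}^{\alpha,q}(y)\,\psi_\lambda^{\alpha,q}(x)\,|\lambda|^{2\alpha+1}\,d_q\lambda\right]|y|^{2\alpha+1}\,d_qy.$$
Since $\overline{\psi_\lambda^{\alpha,q}(y)} = \psi_{-\lambda}^{\alpha,q}(y)$ (part 2 of Proposition \ref{psilabla}), the bracketed integral is precisely the one appearing in Proposition \ref{ortpsi}, and equals $\frac{4(1+q)^{2\alpha}\Gamma_{q^2}^2(\alpha+1)\,\delta_{x,y}}{(1-q)|xy|^{\alpha+1}}$. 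Substituting the explicit value $c_{\alpha,q}=(1+q)^{-\alpha}/\Gamma_{q^2}(\alpha+1)$ cancels the constants, and the expression reduces to
$$\int_{-\infty}^{+\infty} f(y)\,\frac{\delta_{x,y}}{(1-q)|xy|^{\alpha+1}}\,|y|^{2\alpha+1}\,d_qy.$$
For $x\in\R_q$ the $q$-integral (\ref{int2}) is a sum in which only the term $y=x$ survives; a direct computation with $x=\pm q^{n_0}$ shows that the single surviving contribution is exactly $f(x)$, which gives the first equality in (\ref{Dinv}).

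For the second equality, I would perform a purely algebraic manipulation of (\ref{FD}): applying the definition of $F_D^{\alpha,q}$ to $\overline{F_D^{\alpha,q}(f)}$, then using the conjugation identity $\overline{\psi_{-x}^{\alpha,q}(\lambda)} = \psi_x^{\alpha,q}(\lambda)$ and the symmetry $\psi_x^{\alpha,q}(\lambda)=\psi_\lambda^{\alpha,q}(x)$ from Proposition \ref{psilabla}, one obtains
$$\overline{F_D^{\alpha,q}(\overline{F_D^{\alpha,q}(f)})}(x)=\frac{c_{\alpha,q}}{2}\int_{-\infty}^{+\infty} F_D^{\alpha,q}(f)(\lambda)\,\psi_\lambda^{\alpha,q}(x)\,|\lambda|^{2\alpha+1}\,d_q\lambda,$$
which coincides with the inversion integral and therefore equals $f(x)$ by the first part.

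The main obstacle I anticipate is the rigorous justification of the Fubini exchange: the naive bound coming from (\ref{psib}) yields $|\psi_{-\lambda}^{\alpha,q}(y)\psi_\lambda^{\alpha,q}(x)|\,|\lambda|^{2\alpha+1}|y|^{2\alpha+1}$, which is not absolutely $q$-summable in $(\lambda,y)$. The cancellation that makes the double $q$-integral converge is provided precisely by the orthogonality relation of Proposition \ref{ortpsi}, so one must interpret the exchange in the same sense as used in Proposition \ref{Finv} (or, if one prefers a fully rigorous route, first establish the identity on $\mathcal{D}_{q}(\R_q)$, where the double sum is finite, and then extend by density to $L^1_{\alpha,q}(\R_q)$ using continuity of both sides).
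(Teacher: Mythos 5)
Your overall strategy for the first equality (expand, Fubini, orthogonality of $\psi_\lambda^{\alpha,q}$) and for the second equality (conjugation plus the symmetry $\psi_x^{\alpha,q}(\lambda)=\psi_\lambda^{\alpha,q}(x)$) is exactly the paper's, and your constant bookkeeping with $c_{\alpha,q}$ and the Kronecker delta is correct. However, the ``main obstacle'' you anticipate is a misdiagnosis, and the workaround you sketch is not what is needed. The double $q$-integral \emph{is} absolutely convergent; it only looks divergent because you bounded both kernel factors by the sup bound (\ref{psib}). The paper's proof bounds only the factor $\psi_{-\lambda}^{\alpha,q}(t)$ by $4/(q;q)_\infty$ and keeps the factor $\psi_\lambda^{\alpha,q}(x)=\psi_x^{\alpha,q}(\lambda)$ intact: by part 5 of Proposition \ref{psilabla} the function $\lambda\mapsto\psi_x^{\alpha,q}(\lambda)$ lies in $\mathcal{S}_q(\R_q)$, hence decays faster than any power of $|\lambda|^{-1}$ on $\R_q$ and is $q$-integrable against $|\lambda|^{2\alpha+1}d_q\lambda$. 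This gives the finite majorant $\frac{4}{(q;q)_\infty}\|f\|_{1,\alpha,q}\,\|\psi_x^{\alpha,q}(\cdot)\|_{1,\alpha,q}$, so Fubini applies directly, with no cancellation and no density argument required. There is no hidden ``interpretation'' of the exchange in Proposition \ref{Finv} either; the analogous rapid decay of $j_\alpha(\cdot\,;q^2)$ (Lemma \ref{asympj}) does the same job there.

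Note also that your fallback route through $\mathcal{D}_q(\R_q)$ and density would not let you avoid this fact: to make sense of the inversion integral $\int F_D^{\alpha,q}(f)(\lambda)\psi_\lambda^{\alpha,q}(x)|\lambda|^{2\alpha+1}d_q\lambda$ for a general $f\in L_{\alpha,q}^1(\R_q)$ (where $F_D^{\alpha,q}(f)$ is merely bounded), and to pass to the limit in it, you again need $\psi_x^{\alpha,q}\in L_{\alpha,q}^1(\R_q)$, i.e.\ the same rapid-decay input. So the decisive ingredient your argument is missing is precisely the asymmetric estimate built on Proposition \ref{psilabla}, parts 2 and 5; once that is in place your computation goes through verbatim.
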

\proof Let $f\in L_{\alpha ,q}^1 (\R_q)$ and $ x\in \R_q $. Since
for all $\lambda, t  \in \R_q $, we have\\ $\ds \mid  \psi
_\lambda ^{\alpha ,q}(t)\mid \leq \frac{4}{(q;q)_\infty}$,  and
$\lambda\mapsto \psi _\lambda ^{\alpha ,q}(x)$ is in $
\mathcal{S}_{q}(\R_q)$,
then\\
{\small
\begin{eqnarray*}
\int_{-\infty}^\infty\int_{-\infty}^\infty | f(t)\psi _{-\lambda}
^{\alpha ,q}(t)\psi _{\lambda} ^{\alpha
,q}(x)||t\lambda|^{2\alpha+1}d_qtd_q\lambda &\leq&
\frac{4}{(q;q)_\infty}\int_{-\infty}^\infty\int_{-\infty}^\infty |
f(t)||\psi _{\lambda} ^{\alpha
,q}(x)||t\lambda|^{2\alpha+1}d_qtd_q\lambda\\&=&\frac{4}{(q;q)_\infty}\|f\|_{1,\alpha,q}\|\psi
_{x} ^{\alpha ,q}(\centerdot)\|_{1,\alpha,q}. \end{eqnarray*}}
Hence, by the Fubini's theorem, we can exchange the order of the
$q$-integrals and by  Proposition \ref{ortpsi}, we obtain
\begin{eqnarray*}&~~&\frac{c_{\alpha ,q}}{2}\int_{-\infty}^\infty F_D^{\alpha,
q}(f)(\lambda)\psi _{\lambda} ^{\alpha
,q}(x)|\lambda|^{2\alpha+1}d_q\lambda\\&=&\left(\frac{c_{\alpha
,q}}{2}\right)^2\int_{-\infty}^\infty
f(t)\left(\int_{-\infty}^\infty \psi _{-\lambda} ^{\alpha
,q}(t)\psi _{\lambda} ^{\alpha
,q}(x)|\lambda|^{2\alpha+1}d_q\lambda\right)|t|^{2\alpha+1}d_qt=f(x).
\end{eqnarray*}
The second equality is a direct consequence of the definition of
the $q$-Dunkl transform, Proposition \ref{psilabla} and the
definition of the $q$-Jackson integral.
\endproof
\begin{theorem}
i) \underline{Plancherel formula} \\
For $\ds \alpha\geq -1/2$, the $q$-Dunkl transform $F_D^{\alpha,
q}$ is an isomorphism from $\mathcal{S}_q (\R_q)$ onto itself.
Moreover, for all $f \in \mathcal{S}_q (\R_q)$, we have \begin
{equation} \label{pldun}
\|F_D^{\alpha,q}(f)\|_{2,\alpha,q}=\|f\|_{2,\alpha, q}.
\end {equation}
ii) \underline{Plancherel theorem} \\
The $q$-Dunkl transform can be uniquely extended  to an isometric
isomorphism on $L_{\alpha,q}^2(\R_q)$. ~~Its  inverse transform
${(F_D^{\alpha ,q})}^{-1}$ is given by :
\begin{equation}
    {(F_D^{\alpha ,q})}^{-1}(f)(x) =  \frac{c_{\alpha ,q}}{2}\ds\int_{-\infty}^{+\infty}
    f(\lambda) \psi _{\lambda} ^{\alpha ,q}(x).|\lambda|^{2\alpha +1}
    d_q\lambda =F_D^{\alpha ,q}(f)(-x).
\end{equation}
\end {theorem}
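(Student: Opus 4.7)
The plan is to establish part i) on the Schwartz space $\mathcal{S}_q(\R_q)$ using the inversion formula already obtained in Theorem \ref{invD}, and then part ii) by a density argument. The starting point is that $F_D^{\alpha,q}$ is well-defined on $\mathcal{S}_q(\R_q)$: the kernel $\psi_{-\lambda}^{\alpha,q}$ is bounded by $\tfrac{4}{(q;q)_\infty}$ by (\ref{psib}), and the weight $|x|^{2\alpha+1}$ against a rapidly decreasing $f$ gives an integrable integrand.

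First I would show $F_D^{\alpha,q}$ maps $\mathcal{S}_q(\R_q)$ into itself. The key observation is that by the symmetries $\psi_\lambda^{\alpha,q}(x)=\psi_x^{\alpha,q}(\lambda)$ and $\psi_{a\lambda}^{\alpha,q}(x)=\psi_\lambda^{\alpha,q}(ax)$ from Proposition \ref{psilabla}, the relation $\Lambda_{\alpha,q}\psi_\mu^{\alpha,q}=i\mu\psi_\mu^{\alpha,q}$ becomes, when read in the $\lambda$-variable, $\Lambda_{\alpha,q}^{(\lambda)}\psi_{-\lambda}^{\alpha,q}(x)=-ix\,\psi_{-\lambda}^{\alpha,q}(x)$. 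Combining this with (\ref{FDlab}) and $q$-differentiating under the $q$-integral sign (justified by the Schwartz decay of $f$ and the bound (\ref{majdkpsi}) on $\partial_q^n\psi_{-\lambda}^{\alpha,q}$), one obtains
\[
(i\lambda)^n \Lambda_{\alpha,q}^{(\lambda)\,m} F_D^{\alpha,q}(f)(\lambda) = F_D^{\alpha,q}\!\bigl(\Lambda_{\alpha,q}^n((-ix)^m f)\bigr)(\lambda),
\]
up to standard iterated-operator bookkeeping. Expressing $\partial_q$ as a combination of $\Lambda_{\alpha,q}$ and the parity operator (see Proposition in Section 4, point i), the same argument controls $|\lambda^n\partial_q^m F_D^{\alpha,q}(f)(\lambda)|$ by a seminorm of $f$, proving that $F_D^{\alpha,q}(f)\in\mathcal{S}_q(\R_q)$. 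Bijectivity on $\mathcal{S}_q(\R_q)$ then follows immediately from the inversion formula (\ref{Dinv}).

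For the Plancherel equality, I would apply Theorem \ref{invD} to $\overline{f}$ and use the conjugate relation $\overline{\psi_\lambda^{\alpha,q}(x)}=\psi_{-\lambda}^{\alpha,q}(x)$ of Proposition \ref{psilabla} to write $\overline{f(x)}=F_D^{\alpha,q}\bigl(\overline{F_D^{\alpha,q}(f)}\bigr)(x)$. Substituting into $\|f\|_{2,\alpha,q}^2=\int f\,\overline{f}\,|x|^{2\alpha+1}d_qx$ and invoking the symmetry identity (\ref{symD}) with $g=\overline{F_D^{\alpha,q}(f)}$ yields $\|f\|_{2,\alpha,q}^2=\|F_D^{\alpha,q}(f)\|_{2,\alpha,q}^2$.

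For part ii), the density of $\mathcal{D}_q(\R_q)$ (hence of $\mathcal{S}_q(\R_q)$) in $L_{\alpha,q}^2(\R_q)$ allows the isometry of part i) to be extended uniquely to an isometric linear map on $L_{\alpha,q}^2(\R_q)$. Surjectivity of this extension and the explicit form of the inverse come from Theorem \ref{invD}: combining the inversion formula with the identity $\psi_\lambda^{\alpha,q}(x)=\psi_{-\lambda}^{\alpha,q}(-x)$ (take $a=-1$ in Proposition \ref{psilabla}(2)) gives $F_D^{\alpha,q}\bigl(F_D^{\alpha,q}(f)\bigr)(-x)=f(x)$ on $\mathcal{S}_q(\R_q)$, which extends by continuity to $L_{\alpha,q}^2(\R_q)$. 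I expect the main technical obstacle to be the stability of the Schwartz space under $F_D^{\alpha,q}$: the careful bookkeeping required to turn $\lambda^n \partial_q^m$ into multiplication by $x^m$ and the operator $\Lambda_{\alpha,q}^n$ inside the $q$-integral, while maintaining the bounds on $\psi_{-\lambda}^{\alpha,q}$ and its $q$-derivatives in both variables, is the only step that is not an immediate corollary of what has already been proved.
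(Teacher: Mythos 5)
Your proposal is correct and follows essentially the same route as the paper: inversion (Theorem \ref{invD}) plus stability of $\mathcal{S}_q(\R_q)$ via the eigenvalue relation (\ref{FDlab}), the symmetry $\psi_{-\lambda}^{\alpha,q}(x)=\psi_{-x}^{\alpha,q}(\lambda)$ and the bound (\ref{majdkpsi}), then the Plancherel identity from the second equality of (\ref{Dinv}) combined with (\ref{symD}), and a density argument for part ii). The only (cosmetic) difference is that the paper sidesteps your $\Lambda_{\alpha,q}$-to-$\partial_q$ conversion in the $\lambda$-variable by using the equivalent description of $\mathcal{S}_q(\R_q)$ through the quantities $\sup_{\lambda}\left|\partial_q^k\left(\lambda^l F_D^{\alpha,q}(f)(\lambda)\right)\right|$, writing $\lambda^l F_D^{\alpha,q}(f)=(-i)^lF_D^{\alpha,q}(\Lambda_{\alpha,q}^l f)$ and applying $\partial_q^k$ directly to the kernel.
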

\proof
 i) From Theorem \ref{invD}, to prove the first part of
 i) it suffices to prove that $F_D^{\alpha,
q}$ lives $\mathcal{S}_q (\R_q)$ invariant. Moreover, from the
definition of $\mathcal{S}_q (\R_q)$ and the properties of the
operator $\partial_q$ (Lemma \ref{ld}), one can easily see that
$\mathcal{S}_q (\R_q)$ is also the set of all function defined on
$\R_q$, such that for all $k, l\in\N$, we have$$
\sup_{x\in\R_q}\left|\partial_q^k
\left(x^lf(x)\right)\right|<\infty\quad {\rm and}\quad
\lim_{x\rightarrow 0} \partial_q^k f(x)\quad {\rm exists}.$$ Now,
let $f\in \mathcal{S}_q (\R_q)$
 and $k, l\in\N$. On the one hand, using the notation $\ds
 \Lambda_{\alpha,q}^0f=f$ and\\ $\ds
 \Lambda_{\alpha,q}^{n+1}f=\Lambda_{\alpha,q}(\Lambda_{\alpha,q}^nf)$,
 $n\in\N$,
 we obtain from the properties of the operator $
 \Lambda_{\alpha,q}$ that for all $n\in \N$,
 $\Lambda_{\alpha,q}^nf\in \mathcal{S}_q (\R_q)\subset L_{\beta,q}^1(\R_q)
 $ for all $\ds \beta \geq -1/2$.\\
On the other hand,  from the relation (\ref{FDlab}), we have
\begin{eqnarray*}
\lambda^l F_D^{\alpha, q}(f)(\lambda)&=&(-i)^lF_D^{\alpha,
q}(\Lambda_{\alpha,q}^lf)(\lambda)\\&=&
(-i)^l\frac{c_{\alpha,q}}{2}\int_{-\infty}^\infty
\Lambda_{\alpha,q}^lf (x)\psi^{\alpha,q}_{-\lambda}(x)|x|^{2\alpha
+1}d_qx.
\end{eqnarray*}
So, using the relation (\ref{majdkpsi}), we obtain
\begin{eqnarray*}|\partial_q^k(\lambda^l F_D^{\alpha,
q}(f)(\lambda))|&=&\left|
(-i)^l\frac{c_{\alpha,q}}{2}\int_{-\infty}^\infty
\Lambda_{\alpha,q}^lf
(x)\partial_q^k\psi^{\alpha,q}_{-x}(\lambda)|x|^{2\alpha
+1}d_qx\right|\\&\leq&
\frac{2c_{\alpha,q}}{(q;q)_{\infty}}\int_{-\infty}^\infty
|\Lambda_{\alpha,q}^lf (x)||x|^{2\alpha
+k+1}d_qx<\infty.\end{eqnarray*}
 This together with the Lebesgue theorem  prove that  $F_D^{\alpha, q}(f)$ belongs to $\mathcal{S}_q
 (\R_q)$.\\
  By Theorem \ref{invD}, we
 deduce that $F_D^{\alpha, q}$ is an isomorphism of $\mathcal{S}_q
 (\R_q)$ onto itself and for $f\in\mathcal{S}_q (\R_q)$, we have
 $\ds (F_D^{\alpha ,q})^{-1}(f)(x)=F_D^{\alpha ,q}(f)(-x),\quad  x\in
 \R_q.$\\
Finally, the Plancherel formula (\ref{pldun}) is a direct
consequence of the  second equality in Theorem \ref{invD} and the
relation (\ref{symD}).

ii) The result follows from i), Theorem \ref{invD} and the
 density of $\mathcal{ S}_q (\R_q)$ in
 $L_{\alpha,q}^2(\R_{q})$.
\endproof
\begin{theorem}\label{brahim}
 The $q$-Dunkl transform and the $q^2$-analogue Fourier transform are linked by
\begin{equation}\label{7} \forall f \in \mathcal{D}_{q} (\R_q),\quad
    F_D^{\alpha ,q}(f) = \left[ ^t V_{\alpha ,q} (f)\right]\widehat{~~}(.;q^2).
\end{equation}
\end{theorem}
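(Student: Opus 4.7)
The plan is to derive the identity from the duality formula (\ref{vtv}) between $V_{\alpha,q}$ and ${}^tV_{\alpha,q}$, by inserting into the definition of $F_D^{\alpha,q}(f)(\lambda)$ the Mehler-type representation of $\psi_{-\lambda}^{\alpha,q}$ already known to coincide with the action of $V_{\alpha,q}$ on a $q$-exponential.

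First, I would fix $\lambda\in\R_q$ and consider the auxiliary function $h_\lambda:t\mapsto e(-i\lambda t;q^2)$. Since $h_\lambda\in\mathcal{E}_q(\R_q)$, Theorem \ref{vexp} i) (which itself comes from the Mehler representation (\ref{rym})) gives
\begin{equation*}
V_{\alpha,q}(h_\lambda)(x)=\psi_{-\lambda}^{\alpha,q}(x),\qquad x\in\R_q.
\end{equation*}

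Next I would substitute this into the definition (\ref{FD}) of the $q$-Dunkl transform, so that for $f\in\mathcal{D}_q(\R_q)$,
\begin{equation*}
F_D^{\alpha,q}(f)(\lambda)=\frac{c_{\alpha,q}}{2}\int_{-\infty}^{+\infty}V_{\alpha,q}(h_\lambda)(x)\,f(x)\,|x|^{2\alpha+1}d_qx.
\end{equation*}
Applying the duality relation (\ref{vtv}) with the pair $(h_\lambda,f)\in\mathcal{E}_q(\R_q)\times\mathcal{D}_q(\R_q)$ then yields
\begin{equation*}
F_D^{\alpha,q}(f)(\lambda)=K\int_{-\infty}^{+\infty}h_\lambda(t)\,{}^tV_{\alpha,q}(f)(t)\,d_qt=K\int_{-\infty}^{+\infty}{}^tV_{\alpha,q}(f)(t)\,e(-i\lambda t;q^2)\,d_qt,
\end{equation*}
which by (\ref{fou}) is exactly $\bigl[{}^tV_{\alpha,q}(f)\bigr]\widehat{~~}(\lambda;q^2)$.

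The only point that needs checking is that the hypotheses in (\ref{vtv}) are met and that the intermediate $q$-integrals converge absolutely so that the appeal to Fubini hidden inside (\ref{vtv}) is legitimate. Since $f\in\mathcal{D}_q(\R_q)$ has compact support in $\R_q$, since ${}^tV_{\alpha,q}(f)\in\mathcal{D}_q(\R_q)$ by the preceding theorem, and since $|e(-i\lambda t;q^2)|\le 2/(q;q)_\infty$ on $\R_q$ by (\ref{iexp}), the two $q$-integrals above are in fact finite sums for each fixed $\lambda$, so no subtle convergence issue arises. This is the only place that requires any care, and it is routine; the substance of the argument is entirely contained in the two-line chain (\ref{vexp} i})$\,\Rightarrow\,$(\ref{vtv})$\,\Rightarrow\,$definition of the $q^2$-Fourier transform.
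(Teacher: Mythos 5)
Your argument is correct and is exactly the paper's proof read in the opposite direction: the paper starts from $\bigl[{}^tV_{\alpha,q}(f)\bigr]\widehat{~~}(\lambda;q^2)$ and applies (\ref{vtv}) together with Theorem \ref{vexp} i) to land on the definition of $F_D^{\alpha,q}(f)(\lambda)$, while you run the same two steps forward from the definition. The extra remark on convergence is harmless but not needed beyond what the paper already assumes.
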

\proof Using the relation (\ref{vtv}) and Theorem \ref{vexp}, we
obtain for $f\in \mathcal{D}_{q} (\R_q),$
\begin{eqnarray*}
  \left[ ^t V_{\alpha ,q} (f)\right]\widehat{~~}(\lambda )&=&  K \ds\int_{-\infty}^{+\infty}
  (^t V_{\alpha ,q}) (f)(t) e(-i\lambda t;q^2) d_qt\\&=& \frac{c_{\alpha ,q}}{2}
\ds\int_{-\infty}^{+\infty} V_{\alpha ,q}
  (e(-i\lambda x;q^2))f(x)|x|^{2\alpha +1}d_qx \\
 &=&  \frac{c_{\alpha ,q}}{2}\int_{-\infty}^{+\infty}
    f(x) \psi _{-\lambda} ^{\alpha ,q}(x).|x|^{2\alpha +1} d_qx \\
   &=&  F_D^{\alpha ,q}(f)(\lambda ).
\end{eqnarray*}
\endproof

\end{document}